\providecommand{\U}[1]{\protect\rule{.1in}{.1in}}
\newtheorem{theorem}{Theorem}[section]
\theoremstyle{plain}
\newtheorem{proposition}{Proposition}[section]
\numberwithin{equation}{section}
\newtheorem{theorema}{Theorem}[section]
\begin{document}
\title[Stability of Gaussian Poincar\'{e} and HUP monomial weight]{Stability of Gaussian Poincar\'{e} inequalities and Heisenberg Uncertainty
Principle with monimial weights
}
\author{Nguyen Lam}
\address{Nguyen Lam: School of Science and the Environment, Grenfell Campus, Memorial
University of Newfoundland, Corner Brook, NL A2H5G4, Canada}
\email{nlam@mun.ca}
\author{Guozhen Lu}
\address{Guozhen Lu: Department of Mathematics, University of Connecticut, Storrs, CT
06269, USA}
\email{guozhen.lu@uconn.edu}
\author{Andrey Russanov}
\address{Andrey Russanov: Department of Mathematics, University of Connecticut, Storrs,
CT 06269, USA}
\email{andrey.russanov@uconn.edu}

\begin{abstract}
We use the Bakry-\'{E}mery curvature-dimension criterion and $\Gamma$-calculus
to establish the Poincar\'{e} inequality with monomial Gaussian measure, and
then apply the duality approach to study its improvements and its gradient stability. We
also set up the scale-dependent Poincar\'{e} inequality with monomial Gaussian
type measure and use it to inspect the stability of the Heisenberg Uncertainty
Principle with monomial weight. Finally, we apply the improved versions of the
monomial Gaussian Poincar\'{e} inequality to investigate the improved
stability of the Heisenberg Uncertainty Principle with monomial weight. As
special cases of our main results, we obtain the
gradient stability of the classical Gaussian Poincar\'{e} inequality, which is of independent interest. Moreover, we also establish the stability of the sharp stability inequality of the classical Heisenberg Uncertainty
Principle proved in \cite{CFLL24}.

\end{abstract}
\subjclass[2010]{}
\keywords{}

\thanks{The first author was partially supported by an NSERC Discovery Grant. The research for the second and third authors were partially supported by collaboration grants and a Simons Fellowship in Mathematics from the Simons Foundation.}
\maketitle
\section{Introduction}

The main goal of this paper is to set up the Poincar\'{e} inequality with
monomial Gaussian measure, its improvements and its gradient stability, and use them to
investigate the stability and the improved stability of the Heisenberg
Uncertainty Principle with monomial weight. Our main motivation comes from
\cite{CFLL24}, in which the authors used the classical Poincar\'{e} inequality
with Gaussian weight and the Heisenberg Uncertainty Principle identity to
derive the sharp stability, with explicit optimal constants, of the Heisenberg
Uncertainty Principle.

The question about the stability of geometric and functional inequalities was first raised by
Brezis and Lieb in \cite{BL85}. More clearly, Brezis and Lieb asked in
\cite{BL85} whether the difference of the two terms in the Sobolev
inequalities controls the distance to the family of extremal functions. Brezis
and Lieb's question was answered affirmatively by Bianchi and Egnell in
\cite{BE91}. Indeed, by exploiting the special structure of the Hibert space
$W^{1,2}\left(  \mathbb{R}^{N}\right)  $, Bianchi and Egnell established in
\cite{BE91} that
\[
\int_{\mathbb{R}^{N}}\left\vert \nabla u\right\vert ^{2}dx-S_{N}\left(
\int_{\mathbb{R}^{N}}|u|^{\frac{2N}{N-2}}dx\right)  ^{\frac{N-2}{N}}\geq
c_{BE}\inf_{U\in E_{Sob}}\int_{\mathbb{R}^{N}}\left\vert \nabla\left(
u-U\right)  \right\vert ^{2}dx
\]
for some stability constant $c_{BE}>0$. Here $S_{N}$ is the sharp Sobolev
constant and $E_{Sob}$ is the manifold of the optimizers of the Sobolev
inequality. Brezis and Lieb's question together with Bianchi and Egnell's
result have initiated the program of studying the quantitative stability
results for functional and geometric inequalities that has attracted great attention. The literature on the topic
is extremely vast and therefore, we just refer the interested reader to
\cite{BWW03, BDNN20, CF13, CFW13, CLT23, CFMP09, FJ1, FJ2, FN19, FZ22, LW99},
for the study of stability of Sobolev type inequalities.

It is worth noting that the stability constants and whether or not the
stability inequalities can be attained have usually not been investigated in
the literature. For instance, the precise information on the stability constant
$c_{BE}$ was totally missing in the literature until very recently. In a very
recent paper \cite{DEFFL}, Dolbeault, Esteban, Figalli, Frank and Loss shed
some light on the stability constant $c_{BE}$ by providing some optimal lower bounds for $c_{BE}$ when the dimension $N\to \infty$, and established the stability for Gaussian
log-Sobolev inequality as an application. Also, K\"{o}nig proved in
\cite{Kon23} that the optimal lower bound is strictly smaller than the
spectral gap constant $\frac{4}{N+4}$, and derived in \cite{Kon22} its
attainability. 
More recently, in \cite{CLT24}, Chen, Tang and the second
author studied the explicit lower bounds of the stability of the
Hardy-Littlewood-Sobolev inequalities through which they also deduced some explicit lower bounds
for the stability inequality of the higher and fractional order Sobolev
inequalities. Moreover, they also obtained
in  \cite{CLT242, CLT243} the optimal asymptotic lower bounds for the Hardy-Littlewood-Sobolev inequalities and higher and fractional Sobolev inequalities when $N\rightarrow
\infty$  for $0<s<N/2$ and when $s\rightarrow0$ for all $N$.  This latter estimate when $s\to 0$  also allows them to derive the global stability for the log-Sobolev inequality on the sphere
established by Beckner \cite{Beckner93} and sharpen the
earlier local stability obtained by Chen et al in \cite{CLT23}.

\medskip

Heisenberg Uncertainty Principle (HUP) is a fundamental concept in quantum
mechanics that states that there is a limit to the precision with which
certain pairs of physical properties, such as position and momentum, can be
simultaneously known. Mathematically, it can be formulated as follows: For
$u\in S_{0}$, one has
\begin{equation}
\left(  \int_{\mathbb{R}^{N}}\left\vert \nabla u\right\vert ^{2}dx\right)
\left(  \int_{\mathbb{R}^{N}}|u|^{2}|x|^{2}dx\right)  \geq\frac{N^{2}}
{4}\left(  \int_{\mathbb{R}^{N}}|u|^{2}dx\right)  ^{2}. \label{HUP}%
\end{equation}
Here $S_{0}$ is the completion of $C_{0}^{\infty}\left(  \mathbb{R}
^{N}\right)  $ under the norm $\left(  \int_{\mathbb{R}^{N}}\left\vert \nabla
u\right\vert ^{2}dx\right)  ^{\frac{1}{2}}+\left(  \int_{\mathbb{R}^{N}
}|u|^{2}|x|^{2}dx\right)  ^{\frac{1}{2}}$. In \cite{CFLL24}, the authors
proved the following HUP identity: For $u\in S_{0}$, $u\neq0$ and
$\lambda=\left(  \frac{\int_{\mathbb{R}^{N}}|u|^{2}|x|^{2}dx}{\int
_{\mathbb{R}^{N}}|\nabla u|^{2}dx}\right)  ^{\frac{1}{4}}$, one has
\begin{equation}
\left(  \int_{\mathbb{R}^{N}}\left\vert \nabla u\right\vert ^{2}dx\right)
^{\frac{1}{2}}\left(  \int_{\mathbb{R}^{N}}|u|^{2}|x|^{2}dx\right)  ^{\frac
{1}{2}}-\frac{N}{2}\int_{\mathbb{R}^{N}}|u|^{2}dx=\frac{\lambda^{2}}{2}
\int_{\mathbb{R}^{N}}\left\vert \nabla\left(  ue^{\frac{|x|^{2}}{2\lambda^{2}
}}\right)  \right\vert ^{2}e^{-\frac{|x|^{2}}{\lambda^{2}}}dx. \label{HUPI}%
\end{equation}
Obviously, HUP identity (\ref{HUPI}) provides several important information
about the HUP. For instance, it can be deduced from (\ref{HUPI}) that all
optimizers for (\ref{HUP}) are the classical Gaussian profiles. Let
$E_{HUP}=\left\{  \alpha e^{-\beta\left\vert x\right\vert ^{2}}:\alpha
\in\mathbb{R}\text{, }\beta>0\right\}  $ be the set of all optimizers for
(\ref{HUP}). Motivated by Brezis and Lieb's question, we could ask here 
whether or not the HUP is stable. More clearly, is it true that $\delta\left(
u\right)  \approx0$ implies $u\approx\alpha e^{-\beta\left\vert x\right\vert
^{2}}$ in some sense for some $\alpha\in\mathbb{R}$, $\beta>0$, and for some
Heisenberg deficit $\delta$?

In an effort to answer the question of the stability of the HUP, McCurdy
and Venkatraman applied the concentration-compactness arguments and proved in
\cite{MV21} that there exist universal constants $C_{1}>0$ and $C_{2}(N)>0$
such that
\[
\delta_{2}(u)\geq C_{1}\left(  \int_{\mathbb{R}^{N}}|u|^{2}dx\right)
d_{1}^{2}(u,E_{HUP})+C_{2}(N)d_{1}^{4}(u,E_{HUP}),
\]
for all $u\in S_{0}$. Here
\[
\delta_{2}\left(  u\right)  :=\left(  \int_{\mathbb{R}^{N}}|\nabla
u|^{2}dx\right)  \left(  \int_{\mathbb{R}^{N}}|x|^{2}|u|^{2}dx\right)
-\dfrac{N^{2}}{4}\left(  \int_{\mathbb{R}^{N}}|u|^{2}dx\right)  ^{2}
\]
is a HUP deficit and $d_{1}(u,A):=\inf_{v\in A}\left\{  \left\Vert
u-v\right\Vert _{2}\right\}  $ is the distance from $u$ to the set $A$.
Therefore, $\delta\left(  u\right)  \approx0$ implies $u\approx\alpha
e^{-\beta\left\vert x\right\vert ^{2}}$in $L^{2}\left(  \mathbb{R}^{N}\right)
$ for some $\alpha\in\mathbb{R}$, $\beta>0$. A simple and constructive proof
was provided later by Fathi in \cite{F21} to show that $C_{1}=\frac{1}{4}$ and
$C_{2}=\frac{1}{16}$. However, these constants are not sharp. Eventually, the
authors in \cite{CFLL24} combined the HUP identity (\ref{HUPI}) and the
following Poincar\'{e} inequality with Gaussian type measure: for all
$\lambda\neq0$,
\[
\int_{\mathbb{R}^{N}}|\nabla u|^{2}e^{-\frac{1}{2\left\vert \lambda\right\vert
^{2}}|x|^{2}}dx\geq\frac{1}{\left\vert \lambda\right\vert ^{2}}\inf_{c}
\int_{\mathbb{R}^{N}}\left\vert u-c\right\vert ^{2}e^{-\frac{1}{2\left\vert
\lambda\right\vert ^{2}}|x|^{2}}dx\text{, }
\]
to show the following sharp stability of HUP:

\begin{theorema}
\label{A}For all $u\in S_{0}:$
\begin{equation}
\delta_{1}\left(  u\right)  :=\left(  \int_{\mathbb{R}^{N}}\left\vert \nabla
u\right\vert ^{2}dx\right)  ^{\frac{1}{2}}\left(  \int_{\mathbb{R}^{N}}
|u|^{2}|x|^{2}dx\right)  ^{\frac{1}{2}}-\frac{N}{2}\int_{\mathbb{R}^{N}
}|u|^{2}dx\geq d_{1}^{2}(u,E_{HUP}). \label{SHUP}%
\end{equation}
Moreover, the inequality is sharp and the equality can be attained by
nontrivial functions $u\notin E_{HUP}$.
\end{theorema}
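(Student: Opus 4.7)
The strategy is to combine the HUP identity (\ref{HUPI}) with the scale-dependent Gaussian Poincar\'{e} inequality displayed just above the theorem. The identity converts the HUP deficit into a single weighted Dirichlet integral, at which point the Poincar\'{e} inequality converts a gradient into an $L^{2}$-oscillation, and a change of variable then recognises this oscillation as exactly the $L^{2}$-distance from $u$ to the Gaussian family $E_{HUP}$.

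Concretely, fix $u\in S_{0}$ with $u\neq 0$ and set $\lambda=\left(\int|u|^{2}|x|^{2}dx/\int|\nabla u|^{2}dx\right)^{1/4}$, as in (\ref{HUPI}). Let $v:=ue^{|x|^{2}/(2\lambda^{2})}$. Then (\ref{HUPI}) reads
\[
\delta_{1}(u)=\frac{\lambda^{2}}{2}\int_{\mathbb{R}^{N}}|\nabla v|^{2}e^{-|x|^{2}/\lambda^{2}}dx.
\]
The weight $e^{-|x|^{2}/\lambda^{2}}$ corresponds to the Gaussian Poincar\'{e} inequality at scale $\mu$ with $\mu^{2}=\lambda^{2}/2$, so I would apply the stated inequality with this $\mu$ to obtain
\[
\int_{\mathbb{R}^{N}}|\nabla v|^{2}e^{-|x|^{2}/\lambda^{2}}dx\;\geq\;\frac{2}{\lambda^{2}}\inf_{c\in\mathbb{R}}\int_{\mathbb{R}^{N}}|v-c|^{2}e^{-|x|^{2}/\lambda^{2}}dx.
\]
The factor $\lambda^{2}/2$ from (\ref{HUPI}) then cancels exactly the $2/\lambda^{2}$ coming from Poincar\'{e}, which is the reason sharpness survives. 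Undoing the substitution by multiplying inside the square by $e^{-|x|^{2}/(2\lambda^{2})}$ gives
\[
\int_{\mathbb{R}^{N}}|v-c|^{2}e^{-|x|^{2}/\lambda^{2}}dx=\int_{\mathbb{R}^{N}}\bigl|u-ce^{-|x|^{2}/(2\lambda^{2})}\bigr|^{2}dx.
\]

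Combining these three displays yields $\delta_{1}(u)\geq \inf_{c\in\mathbb{R}}\int |u-ce^{-|x|^{2}/(2\lambda^{2})}|^{2}dx$. Since each function $ce^{-|x|^{2}/(2\lambda^{2})}$ with $c\in\mathbb{R}$ lies in $E_{HUP}$ (choosing $\alpha=c$, $\beta=1/(2\lambda^{2})>0$), this infimum is bounded below by the full infimum over $E_{HUP}$, proving (\ref{SHUP}).

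For the sharpness and attainment assertion, I would trace the equality cases in the two inequalities just used. Equality in the Gaussian Poincar\'{e} inequality occurs precisely on the first nontrivial eigenspace of the associated Ornstein--Uhlenbeck operator, namely for $v(x)=c_{0}+L(x)$ with $L$ linear, so the candidate extremals are $u(x)=(c_{0}+L(x))e^{-|x|^{2}/(2\lambda^{2})}$ with $L\neq 0$; such $u$ are manifestly not in $E_{HUP}$. The main (but routine) check is that for any such $u$ the consistency condition $\lambda^{4}=\int|u|^{2}|x|^{2}dx/\int|\nabla u|^{2}dx$ can be satisfied by the defining $\lambda$ and that the second infimum $\inf_{c}\!\int|u-ce^{-|x|^{2}/(2\lambda^{2})}|^{2}dx$ coincides with $\inf_{E_{HUP}}$, i.e.\ that the optimal $\beta$ for the distance to $E_{HUP}$ is exactly $1/(2\lambda^{2})$; this is the place where I expect the computation to be most delicate and where the verification should be carried out by differentiating in $\beta$ and using orthogonality of the Hermite modes $c_{0}$ and $L$ against the Gaussian. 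Everything else is bookkeeping.
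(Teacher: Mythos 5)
Your plan is essentially the paper's own argument (carried out there for the monomial generalization, Theorem \ref{T1.1}, of which (\ref{SHUP}) is the case $A=0$, exactly as in \cite{CFLL24}): the identity (\ref{HUPI}) plus the scale-dependent Gaussian Poincar\'{e} inequality at scale $\mu^{2}=\lambda^{2}/2$, with the constant $\tfrac{\lambda^{2}}{2}\cdot\tfrac{2}{\lambda^{2}}=1$ cancelling, followed by enlarging the competitor class from $\{ce^{-|x|^{2}/(2\lambda^{2})}\}$ to all of $E_{HUP}$. For the attainment step the paper avoids the $\beta$-optimization you flag as delicate by taking the special candidate $u=x_{N}e^{-|x|^{2}/2}$ (your case $c_{0}=0$, $L(x)=x_{N}$, $\lambda=1$), for which the cross term $\int x_{N}e^{-|x|^{2}/2}\,c\,e^{-\beta|x|^{2}}dx$ vanishes for every $(c,\beta)$ by oddness, so $d_{1}^{2}(u,E_{HUP})$ is computed at once and equals $\delta_{1}(u)$; your more general extremals $(c_{0}+L)e^{-|x|^{2}/(2\lambda^{2})}$ do work as well, but they require precisely the $\beta$-minimization you defer.
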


As a consequence, we can deduce from the above Theorem that
\[
\delta_{2}(u)\geq N\left(  \int_{\mathbb{R}^{N}}|u|^{2}dx\right)  d_{1}
^{2}(u,E_{HUP})+d_{1}^{4}(u,E_{HUP})
\]
and the inequality is sharp and can be attained by nontrivial functions
$u\notin E_{HUP}$.

Since the inequality (\ref{SHUP}) in Theorem \ref{A} can be attained by
nontrivial optimizers, we can once again ask for its stability. More clearly,
let $F_{HUP}$ be the set of all extremizers of (\ref{SHUP}). Then it will be interesting to ask the following
\medskip

\noindent\textbf{Question 1.} \textit{Does one have that }
\[
\delta_{1}\left(  u\right)  -d_{1}^{2}(u,E_{HUP})\gtrsim d_{2}^{2}(u,F_{HUP})
\]
\textit{for some distance function }$d_{2}(u,F_{HUP})$\textit{ from }
$u$\textit{ to }$F_{HUP}$\textit{?}

It is also worthy to note that the HUP identity (\ref{HUPI}) is just a
consequence of a more general $L^{2}$-Caffarelli-Kohn-Nirenberg identity that
has been established in \cite{CFLL24}. Indeed, it was showed in \cite{CFLL24} that

\begin{theorema}
\label{B}\textit{Let }$0<R\leq\infty$\textit{, }$U$\textit{ and }$V$\textit{
be }$C^{1}$-\textit{functions on }$\left(  0,R\right)  $ and let
\[
W\left(  r\right)  =\left(  U\left(  r\right)  V\left(  r\right)  \right)
^{\prime}+\left(  N-1\right)  \frac{U\left(  r\right)  V\left(  r\right)  }
{r}-V^{2}\left(  r\right)  \text{.}
\]
Then for all $u\in C_{0}^{\infty}\left(  B_{R}\setminus\left\{  0\right\}
\right)  $, we have
\begin{align*}
&  \left(  {\int\limits_{B_{R}}}U^{2}\left(  \left\vert x\right\vert \right)
\left\vert \nabla u\right\vert ^{2}dx\right)  ^{\frac{1}{2}}\left(
{\int\limits_{B_{R}}}V^{2}\left(  \left\vert x\right\vert \right)  \left\vert
u\right\vert ^{2}dx\right)  ^{\frac{1}{2}}\\
&  =\frac{1}{2}{\int\limits_{B_{R}}}\left[  W\left(  \left\vert x\right\vert
\right)  +V^{2}\left(  \left\vert x\right\vert \right)  \right]  \left\vert
u\right\vert ^{2}dx\\
&  +\frac{1}{2}{\int\limits_{B_{R}}}\left\vert \frac{\left\Vert Vu\right\Vert
_{2}^{\frac{1}{2}}}{\left\Vert U\left\vert \nabla u\right\vert \right\Vert
_{2}^{\frac{1}{2}}}U\left(  \left\vert x\right\vert \right)  \nabla
u+\frac{\left\Vert U\left\vert \nabla u\right\vert \right\Vert _{2}^{\frac
{1}{2}}}{\left\Vert Vu\right\Vert _{2}^{\frac{1}{2}}}V\left(  \left\vert
x\right\vert \right)  u\frac{x}{\left\vert x\right\vert }\right\vert ^{2}dx.
\end{align*}

\end{theorema}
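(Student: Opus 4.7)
The plan is to recognize the identity as a rearrangement of a single integration by parts, once the squared norm on the right-hand side is expanded. Introduce the shorthand
\[
A := \left(\int_{B_{R}} U^{2}(|x|)|\nabla u|^{2}\,dx\right)^{1/2}, \qquad B := \left(\int_{B_{R}} V^{2}(|x|)\,u^{2}\,dx\right)^{1/2},
\]
so that the claimed identity reads $AB = \frac{1}{2}\int [W+V^{2}]\,u^{2}\,dx + \frac{1}{2}\int|\cdots|^{2}\,dx$. My first step is to show, by expanding the squared integrand, that this is equivalent to the single integration by parts formula
\[
\int_{B_{R}} U(|x|)V(|x|)\,u\,\nabla u\cdot\frac{x}{|x|}\,dx \;=\; -\frac{1}{2}\int_{B_{R}} \left[W(|x|)+V^{2}(|x|)\right] u^{2}\,dx.
\]

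For the expansion, I use $|a\vec{X}+b\vec{Y}|^{2} = a^{2}|\vec{X}|^{2} + 2ab\,\vec{X}\cdot\vec{Y} + b^{2}|\vec{Y}|^{2}$ with $\vec{X} = U\nabla u$, $\vec{Y} = Vu\,x/|x|$, $a = (B/A)^{1/2}$, $b = (A/B)^{1/2}$. Because $ab=1$, $a^{2}=B/A$, and $b^{2}=A/B$, the two diagonal contributions give exactly $\frac{B}{2A}A^{2} + \frac{A}{2B}B^{2} = AB$, which cancels the left-hand side of the identity, and only the cross term $\int UVu\,\nabla u\cdot x/|x|\,dx$ survives. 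This reduces the claim to the displayed integration by parts formula above.

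The second step is the integration by parts itself. I rewrite $u\,\nabla u\cdot x/|x| = \frac{1}{2}\nabla(u^{2})\cdot x/|x|$ and apply the divergence theorem to the vector field $u^{2}\,U(|x|)V(|x|)\,x/|x|$, which is smooth and compactly supported in $B_{R}\setminus\{0\}$ because $u\in C_{0}^{\infty}(B_{R}\setminus\{0\})$. Using $\operatorname{div}(x/|x|) = (N-1)/|x|$, one computes
\[
\operatorname{div}\!\left(U(|x|)V(|x|)\frac{x}{|x|}\right) = (UV)'(|x|) + (N-1)\frac{U(|x|)V(|x|)}{|x|} = W(|x|) + V^{2}(|x|),
\]
by the very definition of $W$, so the divergence theorem yields the required identity with no boundary contribution at either $\partial B_{R}$ or the origin. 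I do not expect a substantive obstacle: every step is elementary, and the content of the theorem lies entirely in the clever choice of normalization $a,b$ that makes the diagonal terms collapse to exactly $AB$ while the cross term, after integration by parts, produces precisely the weight $W+V^{2}$.
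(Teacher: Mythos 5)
Your proof is correct: expanding the square with the normalization $a=(B/A)^{1/2}$, $b=(A/B)^{1/2}$ makes the diagonal terms sum to exactly $AB$, and the divergence-theorem computation $\operatorname{div}\bigl(U V\,x/|x|\bigr)=(UV)'+(N-1)UV/|x|=W+V^{2}$ handles the cross term, with no boundary contributions since $u\in C_{0}^{\infty}(B_{R}\setminus\{0\})$ (for complex-valued $u$ one just replaces $u\nabla u$ by $\tfrac12\nabla(|u|^{2})$). The present paper quotes Theorem \ref{B} from \cite{CFLL24} without reproducing a proof, but your completing-the-square plus integration-by-parts argument is precisely the standard derivation of such $L^{2}$-Caffarelli--Kohn--Nirenberg identities used there, so there is nothing further to add.
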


In particular, by choosing $U=sign\left(  N-a-b-1\right)  r^{-b}$, $V=r^{-a}
$. Then
\[
W=\left[  \left\vert N-1-a-b\right\vert r^{-a-b-1}-r^{-2a}\right]  .
\]
Therefore, we get
\begin{align*}
&  \left(  {\int\limits_{\mathbb{R}^{N}}}\frac{1}{\left\vert x\right\vert
^{2b}}\left\vert \nabla u\right\vert ^{2}dx\right)  ^{\frac{1}{2}}\left(
{\int\limits_{\mathbb{R}^{N}}}\frac{1}{\left\vert x\right\vert ^{2a}
}\left\vert u\right\vert ^{2}dx\right)  ^{\frac{1}{2}}\\
&  =\left\vert \frac{N-1-a-b}{2}\right\vert {\int\limits_{\mathbb{R}^{N}}
}\frac{1}{\left\vert x\right\vert ^{a+b+1}}\left\vert u\right\vert ^{2}dx\\
&  +\frac{1}{2}{\int\limits_{\mathbb{R}^{N}}}\left\vert sign\left(
N-a-b-1\right)  \frac{\left\Vert \frac{u}{\left\vert x\right\vert ^{a}
}\right\Vert _{2}^{\frac{1}{2}}}{\left\Vert \frac{\left\vert \nabla
u\right\vert }{\left\vert x\right\vert ^{b}}\right\Vert _{2}^{\frac{1}{2}}
}\frac{1}{\left\vert x\right\vert ^{b}}\nabla u+\frac{\left\Vert
\frac{\left\vert \nabla u\right\vert }{\left\vert x\right\vert ^{b}
}\right\Vert _{2}^{\frac{1}{2}}}{\left\Vert \frac{u}{\left\vert x\right\vert
^{a}}\right\Vert _{2}^{\frac{1}{2}}}\frac{1}{\left\vert x\right\vert ^{a}
}u\frac{x}{\left\vert x\right\vert }\right\vert ^{2}dx.
\end{align*}
Furthermore, with $a=-1$ and $b=0$, we obtain (\ref{HUPI}).

Theorem \ref{B} has been extended further in \cite{AJLL23}. Indeed, it was
showed in \cite{AJLL23} that

\begin{theorema}
\label{C}Let $\Omega$ be an open set in $\mathbb{R}^{N}$, $V\geq0$ be a smooth
function and $\overrightarrow{X}\in C^{1}\left(  \Omega,\mathbb{R}
^{N}\right)  $. Then for any $u\in C_{0}^{1}\left(  \Omega\right)  $, we have
\begin{align*}
&  \left(  {\int\limits_{\Omega}}V\left\vert \nabla u\right\vert
^{2}dx\right)  ^{\frac{1}{2}}\left(  {\int\limits_{\Omega}}V\left\vert
\overrightarrow{X}\right\vert ^{2}\left\vert u\right\vert ^{2}dx\right)
^{\frac{1}{2}}+\frac{1}{2}{\int\limits_{\Omega}\operatorname{div}}\left(
V\overrightarrow{X}\right)  \left\vert u\right\vert ^{2}dx\\
&  =\frac{1}{2}{\int\limits_{\Omega}V}\left\vert \left(  \frac{{\int
\limits_{\Omega}}V\left\vert \overrightarrow{X}\right\vert ^{2}\left\vert
u\right\vert ^{2}dx}{{\int\limits_{\Omega}}V\left\vert \nabla u\right\vert
^{2}dx}\right)  ^{\frac{1}{4}}\nabla u-\left(  \frac{{\int\limits_{\Omega}
}V\left\vert \nabla u\right\vert ^{2}dx}{{\int\limits_{\Omega}}V\left\vert
\overrightarrow{X}\right\vert ^{2}\left\vert u\right\vert ^{2}dx}\right)
^{\frac{1}{4}}u\overrightarrow{X}\right\vert ^{2}dx.
\end{align*}

\end{theorema}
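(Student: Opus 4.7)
My plan is to establish the identity by directly expanding the squared modulus on the right-hand side and then applying a single integration by parts to the resulting cross term. To lighten the notation, set
\[
A = \int_{\Omega} V |\nabla u|^{2} dx, \qquad B = \int_{\Omega} V |\overrightarrow{X}|^{2} |u|^{2} dx,
\]
and first assume both $A$ and $B$ are strictly positive, so that the fourth-root ratios appearing on the right-hand side are well defined. Writing $c_{1} = (B/A)^{1/4}$ and $c_{2} = (A/B)^{1/4}$, I record the identities $c_{1} c_{2} = 1$ and $c_{1}^{2} A = c_{2}^{2} B = \sqrt{A B}$, which will drive the bookkeeping.

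Expanding the squared modulus inside the right-hand integral,
\[
\bigl|c_{1} \nabla u - c_{2} u \overrightarrow{X}\bigr|^{2} = c_{1}^{2} |\nabla u|^{2} + c_{2}^{2} |\overrightarrow{X}|^{2} |u|^{2} - 2 c_{1} c_{2} \, \mathrm{Re}(\overline{u} \nabla u) \cdot \overrightarrow{X},
\]
multiplying by $V$, and integrating over $\Omega$, the two diagonal terms combine to $\tfrac{1}{2}(c_{1}^{2} A + c_{2}^{2} B) = \sqrt{A B}$, matching the first summand on the left-hand side. For the cross contribution, I would use the elementary identity $\mathrm{Re}(\overline{u}\nabla u) = \tfrac{1}{2} \nabla |u|^{2}$ to rewrite it as $-\tfrac{1}{2} \int_{\Omega} V \overrightarrow{X} \cdot \nabla |u|^{2} \, dx$, and then integrate by parts. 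Since $u \in C_{0}^{1}(\Omega)$ has compact support in $\Omega$, no boundary contribution arises, and this quantity becomes $\tfrac{1}{2} \int_{\Omega} \operatorname{div}(V \overrightarrow{X}) |u|^{2} dx$, exactly the second summand on the left-hand side.

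I do not expect any serious obstacle, since the argument reduces to a completion of squares followed by the divergence theorem. The one minor subtlety is the degenerate case $A = 0$ or $B = 0$, in which the fourth-root normalization breaks down. If $A = 0$, then $\nabla u$ vanishes on $\{V > 0\}$, which, together with the compact support of $u$, forces $u \equiv 0$ on $\{V > 0\}$, and every term of the claimed identity vanishes; the case $B = 0$ is analogous. The identity therefore extends by direct inspection to all $u \in C_{0}^{1}(\Omega)$.
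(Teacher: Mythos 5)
Your expansion-plus-integration-by-parts argument is correct and is essentially the standard route: the paper itself does not prove Theorem \ref{C} (it is quoted from \cite{AJLL23}), but the proof there, like that of the related identity in \cite{CFLL24}, is exactly this completion of the square with $c_1c_2=1$, $c_1^2A=c_2^2B=\sqrt{AB}$, followed by $-\tfrac12\int_\Omega V\overrightarrow{X}\cdot\nabla|u|^2\,dx=\tfrac12\int_\Omega \operatorname{div}(V\overrightarrow{X})|u|^2\,dx$ using the compact support of $u$.

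One small inaccuracy in your closing remark: if $A=0$ it does not follow that $u\equiv0$ on $\{V>0\}$; since $\nabla u=0$ there, $u$ is only locally constant on $\{V>0\}$, and a component of $\{V>0\}$ may sit inside a region where $u$ is a nonzero constant. This does not matter for the theorem, whose statement presupposes $A,B>0$ for the fourth roots to make sense; and in the degenerate case one can still check directly that the left-hand side vanishes, since $V\overrightarrow{X}\cdot\nabla|u|^2\equiv0$ pointwise (either $V=0$ or $\nabla u=0$), so $\int_\Omega\operatorname{div}(V\overrightarrow{X})|u|^2\,dx=0$.
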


We note that a $L^{p}$-version of Theorem \ref{C} has also been studied in
\cite{AJLL23} and then has been used to investigate the stability of the
$L^{p}$-Caffarelli-Kohn-Nirenberg inequalities. Variants of Theorem \ref{C}
have also been studied in \cite{DLL22, FLL22, FLLM23, HY22, LLZ19, LLZ20}, to
name just a few.

\medskip

In this paper, we are interested in the sharp versions and the stability of
the Gaussian Poincar\'{e} inequality and the HUP with monomial weight. Here, a
monomial weight is a weight of the form $x^{A}:=x_{1}^{\alpha_{1}}
...x_{N}^{\alpha_{N}}$, $\alpha_{1},...,\alpha_{N}\geq0$. We also say that the
monomial weight $x^{A}$ is full if $\alpha_{i}>0$ for all $i=1,...,N$.
Otherwise, we say that $x^{A}$ is partial. Let $D=N+\alpha_{1}+...+\alpha_{N}$
and $\mathbb{R}_{\ast}^{N}=\left\{  x\in\mathbb{R}^{N}:x_{i}>0\text{ if
}\alpha_{i}>0\right\}  $ be the dimension and the Weyl chamber associated to
$x^{A}$. We note that the functional and geometric inequalities with monomial
weights have been studied intensively in the literature. For instance,
motivated by an open question raised by Brezis \cite{Brezis1}, Cabr\'{e} and
Ros-Oton established in \cite{CR} the Sobolev inequality with monomial weight
and used it to investigate the problem of the regularity of stable solutions
to reaction-diffusion problems of double revolution. In \cite{CRO, CRS16}, the
authors also studied the Sobolev, Morrey, Trudinger and isoperimetric
inequalities with monomial weight $x^{A}$ and homogeneous weight. The optimal constants of the Trudinger-Moser inequalities with
monomial weights were computed explicitly in
\cite{DNP19, GH21, Lam2}. In \cite{BGL14}, Bakry, Gentil and Ledoux combined the
stereographic projection and the Curvature-Dimension condition to set up the
sharp Sobolev inequality with monomial weight. Also, mass transport approach
was used to study the sharp constants and optimizers for the
Gagliardo-Nirenberg inequalities and logarithmic Sobolev inequalities with
arbitrary norm and with monomial weights in \cite{Lam19, NVH}. In \cite{Cas},
the author provided a proof for the Hardy-Sobolev-type inequalities with
monomial weights. However, the best constant and the extremals for the
inequalities were not studied there. Recently, sharp $L^{p}$-Hardy
inequalities and optimal Hardy-Sobolev inequalities with monomial weight have
also been studied in \cite{DLL22}. In \cite{Wang21}, the author derived a
double-weighted Hardy-Sobolev inequality and used it to establish the Gross'
type logarithmic Sobolev inequality \cite{Gro75, Gro75b} with monomial weights
using product structure and the Moser-Onofri-Beckner inequality
\cite{Beckner93} with monomial weights.

It is also worth mentioning that by choosing suitable potential $V$ and vector
field $\overrightarrow{X}$ in Theorem \ref{C}, we can derive a HUP identity
with monomial weight. Indeed, with $V=x^{A}$ and $\overrightarrow{X}=-x$, we
have
\[
{\operatorname{div}}\left(  V\overrightarrow{X}\right)  =-{\operatorname{div}
}\left(  x^{A}x\right)  =-x^{A}\operatorname{div}\left(  x\right)
-x\cdot\nabla x^{A}=-Dx^{A}.
\]
Therefore, Theorem \ref{C} implies that

\begin{proposition}
\label{P0}For $u\in S_{A}$, $u\neq0$ and $\lambda=\left(  \frac{\int
_{\mathbb{R}_{\ast}^{N}}|u|^{2}|x|^{2}x^{A}dx}{\int_{\mathbb{R}_{\ast}^{N}
}|\nabla u|^{2}x^{A}dx}\right)  ^{\frac{1}{4}}$. We have that
\begin{align*}
&  \left(  \int_{\mathbb{R}_{\ast}^{N}}|\nabla u|^{2}x^{A}dx\right)
^{\frac{1}{2}}\left(  \int_{\mathbb{R}_{\ast}^{N}}|u|^{2}|x|^{2}
x^{A}dx\right)  ^{\frac{1}{2}}-\frac{D}{2}\int_{\mathbb{R}_{\ast}^{N}}
|u|^{2}x^{A}dx\\
&  =\frac{\lambda^{2}}{2}\int_{\mathbb{R}_{\ast}^{N}}\left\vert \nabla\left(
ue^{\frac{|x|^{2}}{2\lambda^{2}}}\right)  \right\vert ^{2}e^{-\frac{|x|^{2}
}{\lambda^{2}}}x^{A}dx.
\end{align*}

\end{proposition}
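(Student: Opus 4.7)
The plan is to obtain Proposition \ref{P0} as a direct specialization of Theorem \ref{C}. I would take $\Omega = \mathbb{R}_{\ast}^{N}$, the weight $V(x) = x^{A}$, and the vector field $\overrightarrow{X}(x) = -x$. These choices are designed so that the quantities appearing on the left-hand side of Theorem \ref{C} reproduce precisely the three ingredients of the HUP identity with monomial weight: since $|\overrightarrow{X}|^{2} = |x|^{2}$, the weighted integrals $\int V |\nabla u|^{2}\,dx$ and $\int V |\overrightarrow{X}|^{2} |u|^{2}\,dx$ become exactly the Dirichlet and weighted mass quantities that define the scale $\lambda$, so $\lambda$ as defined in the proposition coincides with the ratio $\bigl(\int V|\overrightarrow{X}|^{2}|u|^{2}/\int V|\nabla u|^{2}\bigr)^{1/4}$ used in Theorem \ref{C}.

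Next I would verify the divergence term. Using $\operatorname{div}(x) = N$ together with $x \cdot \nabla x^{A} = (\alpha_{1} + \cdots + \alpha_{N})\, x^{A}$, a direct computation (already sketched in the excerpt) yields
\[
\operatorname{div}(V\overrightarrow{X}) = -\operatorname{div}(x^{A} x) = -x^{A}\operatorname{div}(x) - x\cdot \nabla x^{A} = -D\, x^{A},
\]
so that the term $\frac{1}{2}\int_{\Omega} \operatorname{div}(V\overrightarrow{X}) |u|^{2}\,dx$ in Theorem \ref{C} contributes exactly $-\frac{D}{2}\int_{\mathbb{R}_{\ast}^{N}} |u|^{2} x^{A}\,dx$, which after moving to the other side produces the HUP constant in Proposition \ref{P0}.

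Then I would recast the squared gradient term on the right-hand side of Theorem \ref{C} in Gaussian form. With the matching above, the expression inside $|\cdot|^{2}$ reduces to $\lambda\, \nabla u + \lambda^{-1}\, u\, x$. A product rule computation gives
\[
\nabla\!\left(u\, e^{|x|^{2}/(2\lambda^{2})}\right) = e^{|x|^{2}/(2\lambda^{2})} \left( \nabla u + \frac{x}{\lambda^{2}}\, u \right),
\]
so that
\[
\lambda^{2}\, \left|\nabla\!\left(u\, e^{|x|^{2}/(2\lambda^{2})}\right)\right|^{2} e^{-|x|^{2}/\lambda^{2}} = \lambda^{2}\, \left| \nabla u + \frac{x}{\lambda^{2}}\, u \right|^{2} = \left| \lambda\, \nabla u + \lambda^{-1}\, u\, x \right|^{2}.
\]
Plugging this identification back into Theorem \ref{C} produces the claimed identity.

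The algebra itself is routine and presents no real obstacle; the only mild subtlety is that Theorem \ref{C} is stated for $u \in C_{0}^{1}(\Omega)$ with $\Omega = \mathbb{R}_{\ast}^{N}$, whereas Proposition \ref{P0} is to hold on the natural completion space $S_{A}$. One would therefore finish with a standard density and continuity argument: both sides of the identity are continuous in the norms that define $S_{A}$ (and in particular finite on $S_{A}$ by hypothesis), so the equality extends from the smooth, compactly supported case to all admissible $u$.
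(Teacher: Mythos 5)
Your proposal is correct and is essentially the paper's own derivation: the paper obtains Proposition \ref{P0} precisely by specializing Theorem \ref{C} to $\Omega=\mathbb{R}_{\ast}^{N}$, $V=x^{A}$, $\overrightarrow{X}=-x$, computing $\operatorname{div}(V\overrightarrow{X})=-Dx^{A}$, and recognizing the quadratic term as $\lambda^{2}\bigl|\nabla\bigl(ue^{|x|^{2}/(2\lambda^{2})}\bigr)\bigr|^{2}e^{-|x|^{2}/\lambda^{2}}$. Your closing remark about passing from compactly supported smooth functions to $S_{A}$ by density and continuity of both sides is a reasonable way to handle the (implicit) extension the paper does not spell out.
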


\noindent Here $S_{A}$ is the completion of $N_{\ast}:=\left\{  u\in C_{0}^{\infty
}\left(  \overline{\mathbb{R}_{\ast}^{N}}\right)  :\nabla u\cdot
\overrightarrow{\eta}=0\text{ on }\partial\mathbb{R}_{\ast}^{N}\right\}  $,
where $\overrightarrow{\eta}$ is the outer normal vector of $\mathbb{R}_{\ast
}^{N}$, under the norm $\left(  \int_{\mathbb{R}_{\ast}^{N}}\left\vert \nabla
u\right\vert ^{2}x^{A}dx\right)  ^{\frac{1}{2}}$ $+\left(  \int_{\mathbb{R}
_{\ast}^{N}}|u|^{2}|x|^{2}x^{A}dx\right)  ^{\frac{1}{2}}$.

It is also clear that from the above identity, we obtain the HUP with monomial
weight
\[
\left(  \int_{\mathbb{R}_{\ast}^{N}}|\nabla u|^{2}x^{A}dx\right)  ^{\frac
{1}{2}}\left(  \int_{\mathbb{R}_{\ast}^{N}}|u|^{2}|x|^{2}x^{A}dx\right)
^{\frac{1}{2}}\geq\frac{D}{2}\int_{\mathbb{R}_{\ast}^{N}}|u|^{2}x^{A}dx,
\]
together with its sharp constant $\frac{D}{2}$ and all its optimizers
$E_{HUPA}:=\left\{  \alpha e^{-\beta\left\vert x\right\vert ^{2}}:\alpha
\in\mathbb{R}\text{, }\beta>0\right\}  $. Therefore, once again, we may ask if
the HUP with monomial weight is stable. More precisely, we would like to
answer the following question:

\noindent\textbf{Question 2.} \textit{Does one have that }
\[
\delta_{A}\left(  u\right)  :=\left(  \int_{\mathbb{R}_{\ast}^{N}}|\nabla
u|^{2}x^{A}dx\right)  ^{\frac{1}{2}}\left(  \int_{\mathbb{R}_{\ast}^{N}
}|u|^{2}|x|^{2}x^{A}dx\right)  ^{\frac{1}{2}}-\frac{D}{2}\int_{\mathbb{R}
_{\ast}^{N}}|u|^{2}x^{A}dx\gtrsim d_{A}^{2}(u,E_{HUPA})
\]
\textit{for some distance function }$d_{A}(u,E_{HUPA})$\textit{ from }
$u$\textit{ to }$E_{HUPA}$\textit{?}

Motivated by the results and approaches in \cite{CFLL24}, Question 1 and
Question 2, the first main goal of this paper is to set up the Poincar\'{e}
inequality for monomial Gaussian weight and its improvements. More precisely,
let $d\mu_{A}=\frac{x^{A}e^{-\frac{1}{2}|x|^{2}}}{\int_{\mathbb{R}_{\ast}^{N}
}x^{A}e^{-\frac{1}{2}|x|^{2}}dx}dx$ be the Gaussian measure with monomial
weight. Let $X_{A}$ be the completion of $N_{\ast}$ under the norm $\left(
\int_{\mathbb{R}_{\ast}^{N}}\left\vert u\right\vert ^{2}d\mu_{A}\right)
^{\frac{1}{2}}+\left(  \int_{\mathbb{R}_{\ast}^{N}}\left\vert \nabla
u\right\vert ^{2}d\mu_{A}\right)  ^{\frac{1}{2}}$. Then we will show the
following Poincar\'{e} inequality with monomial weights:

\begin{theorem}
\label{T1}For all $u\in X_{A}$, we have
\begin{equation}
\int_{\mathbb{R}_{\ast}^{N}}\left\vert \nabla u\right\vert ^{2}d\mu_{A}%
\geq\int_{\mathbb{R}_{\ast}^{N}}\left\vert u-\int_{\mathbb{R}_{\ast}^{N}}%
ud\mu_{A}\right\vert ^{2}d\mu_{A}. \label{PGM}%
\end{equation}
Moreover, if $x^{A}$ is partial, then (\ref{PGM}) can be attained by
non-constant functions.
\end{theorem}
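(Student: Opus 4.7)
The plan is to prove \eqref{PGM} via the Bakry-\'Emery $\Gamma$-calculus applied to the natural symmetric diffusion associated with $d\mu_A$. Set $W(x) = -\sum_{i=1}^N \alpha_i \log x_i + \tfrac12 |x|^2$ on $\mathbb{R}_\ast^N$, so that $d\mu_A \propto e^{-W}\,dx$. The corresponding generator is
\[
Lf = \Delta f + \Bigl(\tfrac{\alpha_1}{x_1},\dots,\tfrac{\alpha_N}{x_N}\Bigr)\cdot \nabla f - x\cdot \nabla f.
\]
For $f,g \in N_\ast$, integration by parts yields $\int (Lf) g \, d\mu_A = -\int \nabla f \cdot \nabla g \, d\mu_A$: the boundary pieces along faces $\{x_i=0\}$ with $\alpha_i>0$ vanish because $x^A$ vanishes there, while the Neumann condition built into $N_\ast$ handles any remaining boundary components. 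The carr\'e du champ is therefore $\Gamma(f,f) = |\nabla f|^2$.

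The key step is the computation of $\Gamma_2(f,f) := \tfrac12 L \Gamma(f,f) - \Gamma(f,Lf)$. A standard Bochner expansion (i.e., commuting $\nabla$ with $\Delta$, and using $\nabla(\nabla W \cdot \nabla f) = (\mathrm{Hess}\, W)\nabla f + (\mathrm{Hess}\, f)\nabla W$) produces the by-now-classical identity
\[
\Gamma_2(f,f) = \|\mathrm{Hess}\, f\|_{HS}^2 + \mathrm{Hess}(W)(\nabla f,\nabla f).
\]
Since $\mathrm{Hess}(W)(x) = \mathrm{diag}(\alpha_i/x_i^2) + I_N \succeq I_N$ pointwise on $\mathbb{R}_\ast^N$, we obtain the Bakry-\'Emery curvature-dimension condition $CD(1,\infty)$. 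The classical semigroup argument then closes the proof: for $P_t = e^{tL}$, the bound $\Gamma_2 \geq \Gamma$ gives $\int \Gamma(P_t f)\, d\mu_A \leq e^{-2t}\int \Gamma(f)\, d\mu_A$, and integrating $\tfrac{d}{dt}\int (P_t f)^2\, d\mu_A = -2\int \Gamma(P_t f)\, d\mu_A$ from $0$ to $\infty$ together with the ergodic limit $P_\infty f = \int f\, d\mu_A$ yields \eqref{PGM} with the sharp constant $1$.

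For the attainability claim when $x^A$ is partial, fix any index $j$ with $\alpha_j = 0$ and test with $f(x) = x_j$. The measure $d\mu_A$ factorizes and $x_j$ is distributed as a standard one-dimensional Gaussian on $\mathbb{R}$ independent of the remaining coordinates, so $\int x_j\, d\mu_A = 0$, $\int x_j^2\, d\mu_A = 1$, and $|\nabla f|^2 \equiv 1$; both sides of \eqref{PGM} then equal $1$, and $f$ is non-constant.

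The main technical obstacle is not the pointwise bound $\mathrm{Hess}(W) \succeq I_N$, which is immediate, but the rigorous justification of the $\Gamma$-calculus in the presence of the singular drift $\alpha_i/x_i$ on the faces $\{x_i=0\}$ with $\alpha_i>0$ and of the implicit Neumann condition. This is handled by working on an exhausting sequence of smooth subdomains of $\mathbb{R}_\ast^N$ bounded away from the singular set, exploiting the integrability of $x_i^{\alpha_i-1}$ near $0$ for $\alpha_i>0$ and the rapid decay of $e^{-|x|^2/2}$ at infinity to discard all boundary and remainder terms, and finally extending the resulting inequality from the core $N_\ast$ to all of $X_A$ by density.
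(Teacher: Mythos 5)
Your proposal follows essentially the same route as the paper: the same weighted generator $\Delta - x\cdot\nabla + \bigl(\tfrac{\alpha_1}{x_1},\dots,\tfrac{\alpha_N}{x_N}\bigr)\cdot\nabla$, the same computation $\Gamma_2(u)=\|\nabla^2 u\|_F^2+|\nabla u|^2+\sum_i\tfrac{\alpha_i}{x_i^2}|\partial_i u|^2\geq \Gamma(u)$ giving $CD(1,\infty)$, and the same attainability test with a coordinate function $x_j$ for an index with $\alpha_j=0$. The only cosmetic difference is that you sketch the semigroup derivation of the Poincar\'e inequality from $CD(1,\infty)$, whereas the paper cites \cite[Proposition 4.8.1]{BGL14} for that step; this is correct.
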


We note that this Poincar\'e inequality includes the classical Poincar\'e inequality with Gaussian measure by taking $\alpha_1=...=\alpha_N=0$.
The approach we use to prove (\ref{PGM}) is the Bakry-\'{E}mery
Curvature-Dimension criterion. This method was first introduced in the
1980's by Bakry and \'{E}mery in \cite{BE85}. Since then, it has been widely
used to study several problems such as heat-kernel and spectral estimates,
Harnack inequalities, Brunn--Minkowski-type inequalities, and isoperimetric,
functional and concentration inequalities, in different settings such as
(weighted) Riemannian geometry, Markov diffusion operators, metric measure
spaces, graphs and discrete spaces. The interested reader is referred to the
excellent book \cite{BGL14}, for instance, and the references therein.

As showed in \cite{CFLL24}, one can use the Poincar\'{e} inequality to
establish the stability of HUP. Therefore, we can expect that in order to
answer Question 1 and set up improved stability of HUP, we will need to study
improved and stability versions of the Poincar\'{e} inequality. This is indeed
our next aim. More precisely, we will show that by using the duality approach,
which is sometimes called the $L^{2}$ H\"{o}rmander method (see \cite{Bonn22}, for instance), we can obtain some
improvements for Theorem \ref{T1}. More clearly, we will prove that

\begin{theorem}
\label{T2}For $u\in X_{A}$, we have
\begin{align}
&  \int_{\mathbb{R}_{\ast}^{N}}\left\vert \nabla u\right\vert ^{2}d\mu
_{A}-\int_{\mathbb{R}_{\ast}^{N}}\left\vert u-\int_{\mathbb{R}_{\ast}^{N}%
}ud\mu_{A}\right\vert ^{2}d\mu_{A}\nonumber\\
&  \geq\frac{1}{2}\int_{\mathbb{R}_{\ast}^{N}}\left\vert \nabla\left[
u-\int_{\mathbb{R}_{\ast}^{N}}\left(  u-\int_{\mathbb{R}_{\ast}^{N}}ud\mu
_{A}\right)  xd\mu_{A}\cdot x\right]  \right\vert ^{2}d\mu_{A}. \label{IPM}%
\end{align}
If $x^{A}$ is partial, then (\ref{IPM}) can be attained by non-linear functions.
\end{theorem}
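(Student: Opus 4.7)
The plan is to use the duality ($L^2$ H\"ormander) method: apply Theorem \ref{T1} to a shifted test function and rearrange algebraically via integration by parts against the Ornstein--Uhlenbeck-type generator of $\mu_A$. Set $\bar u := \int u\, d\mu_A$, $\ell := \int (u - \bar u) x\, d\mu_A \in \mathbb{R}^N$, and $v := u - \ell \cdot x$. The relevant symmetric generator is $L_A u := \Delta u - x \cdot \nabla u + \sum_i (\alpha_i / x_i)\partial_i u$, for which $\int \nabla u \cdot \nabla \phi \, d\mu_A = -\int (L_A u)\phi \, d\mu_A$ on $X_A$ (the boundary terms at $\partial \mathbb{R}_{\ast}^N$ vanish thanks to the factor $x_i^{\alpha_i}$ in the density for each $i$ with $\alpha_i > 0$). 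The crucial algebraic identity is $L_A x_i = -x_i + \alpha_i/x_i$, which in particular says that $x_i$ is a $(-L_A)$-eigenfunction of eigenvalue $1$ precisely when $\alpha_i = 0$.

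Next, I would apply Theorem \ref{T1} to $v \in X_A$ (valid because each $x_i$ is an $H^1(\mu_A)$-limit of Neumann-admissible functions) to obtain $\int |\nabla v|^2 d\mu_A \geq \int (v - \bar v)^2 d\mu_A$. Expanding $\nabla v = \nabla u - \ell$ and $v - \bar v = (u - \bar u) - \ell \cdot (x - \bar x)$ (where $\bar x := \int x \, d\mu_A$), and invoking the IBP identity
$$\ell \cdot \int \nabla u \, d\mu_A = -\int (\ell \cdot x)(L_A u)\, d\mu_A = \ell \cdot \int u x \, d\mu_A - \sum_i \ell_i \alpha_i \int u/x_i \, d\mu_A$$
(which comes directly from $L_A x_i = -x_i + \alpha_i/x_i$), the inequality should reorganize --- after regrouping the covariance cross-term $\ell^\top \mathrm{Cov}_{\mu_A}(x)\ell$ with the ``monomial correction'' $\sum_i \ell_i \alpha_i \int u/x_i \, d\mu_A$ --- into exactly (\ref{IPM}).

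For the attainment claim when $x^A$ is partial, I would take $u = x_j^2 - 1$ for some $j$ with $\alpha_j = 0$. A direct computation shows $\ell = 0$ (each coordinate of $\ell$ vanishes by a combination of odd-Gaussian symmetry in $x_j$ and the factor $\int(x_j^2 - 1)d\mu_{\alpha_j=0} = 0$), so $v = u$; since $-L_A u = 2u$, both sides of (\ref{IPM}) collapse to $\tfrac{1}{2}\int |\nabla u|^2 d\mu_A$, and equality is attained by this non-linear $u$.

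The main obstacle is the algebraic rearrangement in the second paragraph: the covariance terms $\ell^\top \mathrm{Cov}_{\mu_A}(x)\ell$ and the boundary moments $\alpha_i \int u/x_i \, d\mu_A$ must combine precisely to recover (\ref{IPM}). In the classical Gaussian case $A = 0$ this reduces to the familiar Hermite-based identity $\sum_{|\beta| \geq 2}(|\beta| - 2) c_\beta^2 \geq 0$ (which also explains the optimal constant $\tfrac{1}{2}$ via the elementary inequality $|\beta| - 1 \geq |\beta|/2$ for $|\beta| \geq 2$); for general monomial weights, making this work relies on the tensor-product spectral structure of $-L_A$, built from Hermite polynomials in the $\alpha_i = 0$ coordinates and Laguerre-type polynomials in the $\alpha_i > 0$ coordinates.
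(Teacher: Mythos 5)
Your attainment example at the end is fine (it is the special case $a=a_{j}=0$, one $b_{j}\neq0$ of the family the paper uses), but the core step of your argument has a genuine gap: applying Theorem \ref{T1} to the shifted function $v=u-\ell\cdot x$ and rearranging cannot produce (\ref{IPM}). Test this in the classical case $A=0$ with $\int u\,d\mu_{0}=0$: there $\int\nabla u\,d\mu_{0}=\int ux\,d\mu_{0}=\ell$ and $\int x_{i}x_{j}\,d\mu_{0}=\delta_{ij}$, so
\[
\int|\nabla v|^{2}d\mu_{0}=\int|\nabla u|^{2}d\mu_{0}-|\ell|^{2},
\qquad
\int|v-\bar v|^{2}d\mu_{0}=\int u^{2}d\mu_{0}-|\ell|^{2},
\]
and the Poincar\'e inequality applied to $v$ collapses to the Poincar\'e inequality for $u$ itself: the linear corrections cancel identically on both sides, so no regrouping of covariance terms or monomial boundary moments can generate the gain. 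In Hermite language, Theorem \ref{T1} applied to $v$ only says $\sum_{k\ge2}k\,c_{k}^{2}\ge\sum_{k\ge2}c_{k}^{2}$, whereas (\ref{IPM}) is $\sum_{k\ge2}(k-1)c_{k}^{2}\ge\frac12\sum_{k\ge2}k\,c_{k}^{2}$, i.e.\ the statement that everything orthogonal to constants \emph{and} linears sits at eigenvalue $\ge2$. That is second-spectral-gap information, and a single application of the first-gap inequality to a shifted test function cannot yield it; your own closing paragraph concedes this "main obstacle" but never resolves it, and the Hermite--Laguerre decomposition you gesture at is a different (uncarried-out) argument.

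The missing idea, which is how the paper proceeds, is the dual (Poisson equation) step: solve $-\mathbf{L}_{A}w=u-\int u\,d\mu_{A}$ with the Neumann condition, and use $\int\Gamma_{2}(w)\,d\mu_{A}=\int|\mathbf{L}_{A}w|^{2}d\mu_{A}$ together with the Bochner-type identity $\Gamma_{2}(w)=\|\nabla^{2}w\|_{F}^{2}+|\nabla w|^{2}+|\widetilde{\nabla}_{A}w|^{2}$ to obtain the exact identity
\[
\int|\nabla u|^{2}d\mu_{A}-\int\Bigl|u-\int u\,d\mu_{A}\Bigr|^{2}d\mu_{A}
=\int|\nabla u-\nabla w|^{2}d\mu_{A}+\int\|\nabla^{2}w\|_{F}^{2}d\mu_{A}+\int|\widetilde{\nabla}_{A}w|^{2}d\mu_{A}.
\]
The improvement enters through the Hessian term: applying Theorem \ref{T1} componentwise to $\partial_{i}w$ and identifying $\int\partial_{i}w\,d\mu_{A}=\int x_{i}\bigl(u-\int u\,d\mu_{A}\bigr)d\mu_{A}=\ell_{i}$ (by the same integration by parts with $\mathbf{L}_{A}x_{i}=\frac{\alpha_{i}}{x_{i}}-x_{i}$ that you wrote down) gives $\int\|\nabla^{2}w\|_{F}^{2}d\mu_{A}\ge\int|\nabla w-\ell|^{2}d\mu_{A}$, and then $|a|^{2}+|b|^{2}\ge\frac12|a+b|^{2}$ with $a=\nabla u-\nabla w$, $b=\nabla w-\ell$ closes the proof. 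Without the Poisson solution $w$ and the $\Gamma_{2}$ identity (or, alternatively, a full spectral analysis of $-\mathbf{L}_{A}$, which the paper deliberately avoids for general monomial weights), your proposed rearrangement does not reach (\ref{IPM}).
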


In the case when $x^{A}$ is partial, we can regard (\ref{IPM}) as a gradient
stability version of the Poincar\'{e} inequality (\ref{PGM}).

As a consequence, by applying the Poincar\'{e} inequality to the RHS of
(\ref{IPM}), we also obtain the following improved Poincar\'{e} inequality
with monomial Gaussian weight that can be used to study the improved stability
of the Heisenberg Uncertainty Principle with monomial weight:

\begin{proposition}
\label{P1}For $u\in X_{A}$, we have
\begin{align*}
&  \int_{\mathbb{R}_{\ast}^{N}}\left\vert \nabla u\right\vert ^{2}d\mu
_{A}-\int_{\mathbb{R}_{\ast}^{N}}\left\vert u-\int_{\mathbb{R}_{\ast}^{N}
}ud\mu_{A}\right\vert ^{2}d\mu_{A}\\
&  \geq\frac{1}{2}\int_{\mathbb{R}_{\ast}^{N}}\left\vert u-\int_{\mathbb{R}
_{\ast}^{N}}ud\mu_{A}+\int_{\mathbb{R}_{\ast}^{N}}uxd\mu_{A}\cdot
\int_{\mathbb{R}_{\ast}^{N}}xd\mu_{A}-\int_{\mathbb{R}_{\ast}^{N}}ud\mu
_{A}\left\vert \int_{\mathbb{R}_{\ast}^{N}}xd\mu_{A}\right\vert ^{2}\right. \\
&  \phantom{+++++}\left.  -\left(  \int_{\mathbb{R}_{\ast}^{N}}uxd\mu
_{A}\right)  \cdot x+\int_{\mathbb{R}_{\ast}^{N}}ud\mu_{A}\left(
\int_{\mathbb{R}_{\ast}^{N}}xd\mu_{A}\right)  \cdot x\right\vert ^{2}d\mu_{A}.
\end{align*}

\end{proposition}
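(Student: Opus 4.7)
The plan is a two-step bootstrap of Theorems \ref{T1} and \ref{T2}. First, I would invoke Theorem \ref{T2}, which rewrites the Poincar\'e deficit as $\frac{1}{2}\int_{\mathbb{R}_{\ast}^{N}}|\nabla v|^{2}d\mu_{A}$, where
\[
v:=u-\overrightarrow{c}\cdot x,\qquad \overrightarrow{c}:=\int_{\mathbb{R}_{\ast}^{N}}(u-\bar{u})\,x\,d\mu_{A},\qquad \bar{u}:=\int_{\mathbb{R}_{\ast}^{N}}u\,d\mu_{A}.
\]
Then I would apply Theorem \ref{T1} (the monomial Gaussian Poincar\'e inequality) to $v$ itself, which gives
\[
\int_{\mathbb{R}_{\ast}^{N}}|\nabla v|^{2}d\mu_{A}\;\geq\; \int_{\mathbb{R}_{\ast}^{N}}\Bigl|v-\int_{\mathbb{R}_{\ast}^{N}}v\,d\mu_{A}\Bigr|^{2}d\mu_{A}.
\]
Chaining these two estimates yields the stated lower bound, provided one identifies $v-\int v\,d\mu_{A}$ with the function appearing inside the squared norm on the right-hand side of the proposition.

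This identification is purely algebraic. By linearity of the integral,
\[
\int_{\mathbb{R}_{\ast}^{N}}v\,d\mu_{A}\;=\;\bar{u}-\overrightarrow{c}\cdot\int_{\mathbb{R}_{\ast}^{N}}x\,d\mu_{A},
\]
so $v-\int v\,d\mu_{A}=u-\bar{u}-\overrightarrow{c}\cdot x+\overrightarrow{c}\cdot\int x\,d\mu_{A}$. Substituting $\overrightarrow{c}=\int ux\,d\mu_{A}-\bar{u}\int x\,d\mu_{A}$ and distributing the two dot products reproduces verbatim the six-term expression displayed in Proposition \ref{P1}, including the cross contributions $\int ux\,d\mu_{A}\cdot\int x\,d\mu_{A}$, $-\bar{u}|\int x\,d\mu_{A}|^{2}$, $-(\int ux\,d\mu_{A})\cdot x$, and $\bar{u}(\int x\,d\mu_{A})\cdot x$.

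The one analytic point worth checking is the admissibility of $v$ as an element of $X_{A}$, since the proof applies Theorem \ref{T1} to $v$ and not to $u$. Because $d\mu_{A}$ has Gaussian decay, each coordinate function $x_{i}$ satisfies $\int x_{i}^{2}d\mu_{A}<\infty$, and its gradient $e_{i}$ is bounded, so $x_{i}$ can be approximated in the $X_{A}$-norm by standard cut-offs drawn from $N_{\ast}$; hence $\overrightarrow{c}\cdot x\in X_{A}$ and so $v\in X_{A}$ whenever $u\in X_{A}$. Beyond this minor density verification the argument is essentially an exercise in bookkeeping, and I do not expect any further obstacle -- the only real care needed is to track the four constant cross-terms produced when subtracting $\int v\,d\mu_{A}$ from $v$.
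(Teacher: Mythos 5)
Your proposal is correct and takes essentially the same route as the paper: invoke Theorem \ref{T2} to bound the deficit below by $\tfrac12\int_{\mathbb{R}_{\ast}^{N}}|\nabla v|^{2}d\mu_{A}$ with $v=u-\overrightarrow{c}\cdot x$, $\overrightarrow{c}=\int_{\mathbb{R}_{\ast}^{N}}(u-\int_{\mathbb{R}_{\ast}^{N}}u\,d\mu_{A})x\,d\mu_{A}$, then apply Theorem \ref{T1} to $v$ and expand $v-\int_{\mathbb{R}_{\ast}^{N}}v\,d\mu_{A}$, which is exactly the paper's argument. One small imprecision: Theorem \ref{T2} gives this as a lower bound, not an identity as your phrasing suggests, but since your chaining only uses the inequality direction the argument is unaffected.
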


In particular, in the classical Gaussian weight $d\mu_{0}=\frac{e^{-\frac
{1}{2}|x|^{2}}}{\int_{\mathbb{R}^{N}}e^{-\frac{1}{2}|x|^{2}}dx}dx=\frac
{e^{-\frac{1}{2}|x|^{2}}}{\left(  2\pi\right)  ^{\frac{N}{2}}}dx$, by noting
that $\int_{\mathbb{R}^{N}}xd\mu_{0}=\overrightarrow{0}$, we obtain as a
consequence of Theorem \ref{T2} and Proposition \ref{P1} that for $u\in X_{0}
$, we have
\begin{align}
&  \int_{\mathbb{R}^{N}}|\nabla u|^{2}d\mu_{0}-\int_{\mathbb{R}^{N}}\left\vert
u-\int_{\mathbb{R}^{N}}ud\mu_{0}\right\vert ^{2}d\mu_{0}\nonumber\\
&  \geq\frac{1}{2}\int_{\mathbb{R}^{N}}\left\vert \nabla u-\int_{\mathbb{R}
^{N}}uxd\mu_{0}\right\vert ^{2}d\mu_{0}\label{SPG}\\
&  \geq\frac{1}{2}\int_{\mathbb{R}^{N}}\left\vert u-\int_{\mathbb{R}^{N}}
ud\mu_{0}-\left(  \int_{\mathbb{R}^{N}}uxd\mu_{0}\right)  \cdot x\right\vert
^{2}d\mu_{0}. \label{IPG}%
\end{align}
Obviously, this is an improvement of the classical Poincar\'{e} inequality
with Gaussian weights:
\begin{equation}
\int_{\mathbb{R}^{N}}|\nabla u|^{2}d\mu_{0}\geq\int_{\mathbb{R}^{N}}\left\vert
u-\int_{\mathbb{R}^{N}}ud\mu_{0}\right\vert ^{2}d\mu_{0}. \label{PoiGau}%
\end{equation}
Moreover, (\ref{SPG}) implies that

\begin{theorem}
\label{P1.1}For $u\in X_{0}$, we have
\[
\int_{\mathbb{R}^{N}}|\nabla u|^{2}d\mu_{0}-\int_{\mathbb{R}^{N}}\left\vert
u-\int_{\mathbb{R}^{N}}ud\mu_{0}\right\vert ^{2}d\mu_{0}\geq\frac{1}{2}
\inf_{c,\overrightarrow{d}}\int_{\mathbb{R}_{\ast}^{N}}\left\vert
\nabla\left[  u-\left(  c+\overrightarrow{d}\cdot x\right)  \right]
\right\vert ^{2}d\mu_{0}.
\]
Also, the equality can be attained by non-linear functions.
\end{theorem}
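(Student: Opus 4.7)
My plan for Theorem \ref{P1.1} is to reduce it to the already-established improved Poincar\'e inequality (\ref{SPG}) by reinterpreting the right-hand side of (\ref{SPG}) as the desired infimum. Since $\nabla(c+\vec d\cdot x)=\vec d$ does not depend on $c$, the infimum appearing in the theorem collapses to an infimum over $\vec d\in\mathbb{R}^N$:
\[
\inf_{c\in\mathbb{R},\,\vec d\in\mathbb{R}^N}\int_{\mathbb{R}^N}\bigl|\nabla[u-(c+\vec d\cdot x)]\bigr|^2 d\mu_0 \;=\; \inf_{\vec d\in\mathbb{R}^N}\int_{\mathbb{R}^N}|\nabla u-\vec d|^2 d\mu_0.
\]
The right-hand side is a strictly convex quadratic in $\vec d$ and is minimized at the vector-valued $L^2(d\mu_0)$-mean $\vec d^{*}=\int_{\mathbb{R}^N}\nabla u\,d\mu_0$.

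Next, I would use the Gaussian integration-by-parts identity, which follows from $\nabla e^{-|x|^2/2}=-x\,e^{-|x|^2/2}$ and the fact that $d\mu_0$ has no boundary terms on $\mathbb{R}^N$:
\[
\int_{\mathbb{R}^N}\nabla u\,d\mu_0 \;=\; \int_{\mathbb{R}^N}u\,x\,d\mu_0.
\]
Hence $\vec d^{*}=\int_{\mathbb{R}^N}ux\,d\mu_0$, and the infimum in the statement equals $\int_{\mathbb{R}^N}|\nabla u-\int_{\mathbb{R}^N}ux\,d\mu_0|^{2}d\mu_0$, which is precisely the right-hand side of (\ref{SPG}). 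Combining this identification with (\ref{SPG}) immediately yields the claimed inequality.

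For the attainment claim I would exhibit an explicit non-linear extremizer by testing on a degree-two Hermite polynomial, say $u(x)=x_1^2-1$. A short direct computation (odd-moment vanishing plus the second and fourth Gaussian moments) shows $\int u\,d\mu_0=0$, $\int u\,x\,d\mu_0=\vec 0$, $\int|\nabla u|^{2}d\mu_0=4$ and $\int u^{2}d\mu_0=2$, so both sides of the inequality equal $2$; since $u$ is non-linear this completes the attainability part.

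I do not anticipate any serious obstruction in this argument: once (\ref{SPG}) is in hand, the only substantive step is the identification of the optimal $\vec d$ through Gaussian integration by parts, and that is immediate. The mildest care needed is to justify the integration by parts for $u\in X_0$ by a standard density approximation from $N_{\ast}$ (which for $x^A\equiv 1$ reduces to $C_0^{\infty}(\mathbb{R}^N)$), since both sides of the inequality are continuous with respect to the $X_0$-norm.
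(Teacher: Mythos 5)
Your argument is correct, but it takes a different route from the proof the paper actually gives for this theorem. You reduce the statement to the already-derived inequality (\ref{SPG}) by observing that $\nabla[u-(c+\overrightarrow{d}\cdot x)]=\nabla u-\overrightarrow{d}$, so the infimum over $(c,\overrightarrow{d})$ collapses to an infimum over $\overrightarrow{d}$ alone, minimized at $\overrightarrow{d}^{*}=\int_{\mathbb{R}^N}\nabla u\,d\mu_0=\int_{\mathbb{R}^N}ux\,d\mu_0$ by Gaussian integration by parts; this makes explicit an identification the paper only states implicitly when it says that (\ref{SPG}) implies Theorem \ref{P1.1}. (Note that for the inequality itself you only need the trivial direction, that the infimum is at most the value at $\overrightarrow{d}=\int ux\,d\mu_0$; the exact identification via integration by parts is needed only if you want equality of the two right-hand sides, and your density remark covers that for $u\in X_0$.) The paper's designated proof instead establishes the inequality directly by the Hermite spectral decomposition: writing $u=\sum_k c_k\phi_k$, the deficit is $\sum_{i\ge 2}(i-1)c_i^2$ while $\int|\nabla u-\int xu\,d\mu_0|^2 d\mu_0=\sum_{i\ge 2}i\,c_i^2$, and $(i-1)\ge i/2$ for $i\ge 2$; equality is then read off for any $u$ spanned by Hermite polynomials of degree at most $2$. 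Your route buys economy (it reuses the duality-based Theorem \ref{T2} through (\ref{SPG}) and needs no spectral theory), while the paper's spectral proof is self-contained and exhibits the full equality set rather than a single example; your explicit extremizer $u=x_1^2-1$ is a special case of the paper's family $\sum_{k=0}^2 c_k\phi_k$, and your moment computations ($\int u\,d\mu_0=0$, $\int ux\,d\mu_0=\vec 0$, $\int|\nabla u|^2 d\mu_0=4$, $\int u^2 d\mu_0=2$) are correct, so both sides indeed equal $2$ and the attainment claim is settled.
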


This can be considered as a version of the gradient stability of the classical
Poincar\'{e} inequality with Gaussian measure.

We also note that as in Proposition \ref{P1.1}, the equality in (\ref{SPG})
can be achieved by non-linear functions. However, it is not the case for
(\ref{IPG}). In this situation, we will show that by using the standard
spectral analysis of the Hermite polynomials, we can obtain the following
version that provides the sharp constant for (\ref{IPG}):

\begin{theorem}
\label{P2}For $u\in X_{0}$, we have
\begin{align*}
&  \int_{\mathbb{R}^{N}}|\nabla u|^{2}d\mu_{0}-\int_{\mathbb{R}^{N}}\left\vert
u-\int_{\mathbb{R}^{N}}ud\mu_{0}\right\vert ^{2}d\mu_{0}\\
&  \geq\frac{1}{2}\int_{\mathbb{R}^{N}}\left\vert \nabla u-\int_{\mathbb{R}
^{N}}uxd\mu_{0}\right\vert ^{2}d\mu_{0}\\
&  \geq\int_{\mathbb{R}^{N}}\left\vert u-\int_{\mathbb{R}^{N}}ud\mu
_{0}-\left(  \int_{\mathbb{R}^{N}}uxd\mu_{0}\right)  \cdot x\right\vert
^{2}d\mu_{0}.
\end{align*}
Moreover, the equality can be attained by non-linear functions.
\end{theorem}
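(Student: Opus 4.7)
The first inequality in the chain is nothing but (\ref{SPG}) from Theorem~\ref{T2} specialized to the classical Gaussian measure $d\mu_0$, using $\int_{\mathbb{R}^N} x\, d\mu_0 = \vec 0$, so the substantive content is the second inequality, where the constant improves by a factor of two over the naive Poincar\'e-based bound (\ref{IPG}). The plan is to carry out a spectral analysis using the tensor-product probabilists' Hermite basis $\{h_\alpha\}_{\alpha\in\mathbb{N}^N}$ of $L^2(d\mu_0)$, normalized so that $h_0 \equiv 1$, $h_{e_i} = x_i$, and $\int h_\alpha h_\beta\, d\mu_0 = \delta_{\alpha\beta}$. These are eigenfunctions of the Ornstein--Uhlenbeck operator $L = \Delta - x\cdot\nabla$ with $-L h_\alpha = |\alpha|\, h_\alpha$, and an integration by parts yields the crucial gradient orthogonality $\int \nabla h_\alpha \cdot \nabla h_\beta\, d\mu_0 = |\alpha|\, \delta_{\alpha\beta}$.

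Expanding $u = \sum_\alpha c_\alpha h_\alpha$ in $L^2(d\mu_0)$, one reads off the low-order Fourier coefficients as $c_0 = \int u\, d\mu_0$ and $c_{e_i} = \int u\, x_i\, d\mu_0$, so the function
\[
v := u - \int_{\mathbb{R}^N} u\, d\mu_0 - \Bigl(\int_{\mathbb{R}^N} u x\, d\mu_0 \Bigr)\cdot x = \sum_{|\alpha|\geq 2} c_\alpha h_\alpha
\]
satisfies $\nabla v = \nabla u - \int u x\, d\mu_0$. Parseval's identity then converts each of the three quantities appearing in the statement into an explicit spectral sum:
\begin{align*}
\int_{\mathbb{R}^N} |\nabla u|^2 d\mu_0 - \int_{\mathbb{R}^N} \Bigl| u - \int u\, d\mu_0\Bigr|^2 d\mu_0 &= \sum_{|\alpha|\geq 2} (|\alpha|-1)\, c_\alpha^2, \\
\tfrac12 \int_{\mathbb{R}^N} \Bigl|\nabla u - \int u x\, d\mu_0\Bigr|^2 d\mu_0 &= \tfrac12 \sum_{|\alpha|\geq 2} |\alpha|\, c_\alpha^2, \\
\int_{\mathbb{R}^N} v^2\, d\mu_0 &= \sum_{|\alpha|\geq 2} c_\alpha^2.
\end{align*}
Both links of the chain then collapse to the elementary termwise bounds $|\alpha| - 1 \geq |\alpha|/2 \geq 1$, valid for every $|\alpha|\geq 2$.

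For the attainment clause, any $u$ whose Hermite expansion is supported in $|\alpha|\leq 2$ with at least one nonzero second-level coefficient saturates both inequalities simultaneously; the concrete choice $u(x) = x_1^2 - 1 = \sqrt{2}\, h_{2 e_1}$ gives the value $2$ for all three quantities by direct computation and provides an explicit non-linear extremizer. The only step that requires care, and the one I expect to be the main (though routine) obstacle, is the density argument justifying the Hermite expansion and the termwise integration by parts for arbitrary $u \in X_0$: one first verifies the identities for finite linear combinations of Hermite polynomials, where every manipulation is elementary, and then passes to the limit using that $\mathrm{span}\{h_\alpha\}$ is dense in $X_0$ with respect to the Sobolev--Gauss norm $(\int u^2 d\mu_0)^{1/2} + (\int |\nabla u|^2 d\mu_0)^{1/2}$, a standard fact about the Ornstein--Uhlenbeck semigroup.
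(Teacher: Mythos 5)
Your proposal is correct and follows essentially the same route as the paper: a Hermite (Ornstein--Uhlenbeck eigenfunction) expansion, Parseval identities reducing all three quantities to the spectral sums $\sum(|\alpha|-1)c_\alpha^2$, $\tfrac12\sum|\alpha|c_\alpha^2$, $\sum c_\alpha^2$ over $|\alpha|\ge 2$, and the termwise bounds $|\alpha|-1\ge |\alpha|/2\ge 1$, with equality for expansions supported in degree at most $2$. Your treatment is only slightly more explicit than the paper's (multi-index bookkeeping, the concrete extremizer $x_1^2-1$, and the density justification), but the underlying argument is identical.
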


This result will play an important role in answering Question 1.

\medskip

Regarding the stability of the HUP with monomial weight, Theorem \ref{T1} and
its improvements are not enough to answer the Question 2. In fact, as showed
in \cite{CFLL24}, in order to study the stability of the HUP, one needs a
version of the Poincar\'{e} inequality with the monomial Gaussian weight
depending on the scaling factor. Therefore, our next goal is to establish a
scale-dependent Poincar\'{e} inequality with the monomial Gaussian weight.
More clearly, let $\lambda>0$ and $d\mu_{A,\lambda}=\frac{x^{A}e^{-\frac
{1}{2\left\vert \lambda\right\vert ^{2}}|x|^{2}}}{\int_{\mathbb{R}_{\ast}^{N}
}x^{A}e^{-\frac{1}{2\left\vert \lambda\right\vert ^{2}}|x|^{2}}dx}dx$ be the
Gaussian type measure with monomial weight. Let $X_{\lambda,A}$ be the
completion of $N_{\ast}$ under the norm $\left(  \int_{\mathbb{R}_{\ast}^{N}
}\left\vert u\right\vert ^{2}d\mu_{A,\lambda}\right)  ^{\frac{1}{2}}+\left(
\int_{\mathbb{R}_{\ast}^{N}}\left\vert \nabla u\right\vert ^{2}d\mu
_{A,\lambda}\right)  ^{\frac{1}{2}}$. Then we will establish the following
Poincar\'{e} inequality with $d\mu_{A,\lambda}$:

\begin{theorem}
\label{T3}For $u\in X_{\lambda,A}$, we have
\begin{equation}
\int_{\mathbb{R}_{\ast}^{N}}|\nabla u|^{2}d\mu_{A,\lambda}\geq\frac
{1}{\left\vert \lambda\right\vert ^{2}}\inf_{c}\int_{\mathbb{R}_{\ast}^{N}%
}\left\vert u-c\right\vert ^{2}d\mu_{A,\lambda}. \label{LPGM}%
\end{equation}
Moreover, if $x^{A}$ is partial, then (\ref{LPGM}) can be attained by
non-constant functions.
\end{theorem}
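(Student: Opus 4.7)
The plan is to derive Theorem \ref{T3} from Theorem \ref{T1} by the obvious one-parameter rescaling, reducing the scale-dependent inequality to the case $\lambda=1$. Since $\lambda>0$ and the Weyl chamber $\mathbb{R}_\ast^N$ is invariant under the positive dilation $x\mapsto \lambda x$, the map $u\mapsto v$ given by $v(y):=u(\lambda y)$ is a bijection from $X_{\lambda,A}$ onto $X_A$. I would first carry out the bookkeeping on the normalizing constant of $d\mu_{A,\lambda}$: writing $Z_\lambda:=\int_{\mathbb{R}_\ast^N} x^A e^{-|x|^2/(2\lambda^2)}dx$ and substituting $x=\lambda y$, one obtains $Z_\lambda=\lambda^{D}Z_1$ with $D=N+\alpha_1+\cdots+\alpha_N$, as already used in Proposition \ref{P0}.

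Next I would track how the two sides of (\ref{LPGM}) transform under the change of variables $x=\lambda y$. Using $\nabla u(\lambda y)=\lambda^{-1}\nabla v(y)$ together with the homogeneity of $x^A$, direct computation gives the two scaling identities
\begin{equation*}
\int_{\mathbb{R}_\ast^N}|\nabla u|^2\,d\mu_{A,\lambda}=\frac{1}{\lambda^{2}}\int_{\mathbb{R}_\ast^N}|\nabla v|^2\,d\mu_A,\qquad \int_{\mathbb{R}_\ast^N}|u-c|^2\,d\mu_{A,\lambda}=\int_{\mathbb{R}_\ast^N}|v-c|^2\,d\mu_A,
\end{equation*}
valid for every constant $c$. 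Applying Theorem \ref{T1} to $v\in X_A$ and relaxing from the optimal constant $c=\int_{\mathbb{R}_\ast^N}v\,d\mu_A$ to the infimum over $c$, I obtain
\begin{equation*}
\int_{\mathbb{R}_\ast^N}|\nabla v|^2\,d\mu_A\;\geq\;\inf_{c\in\mathbb{R}}\int_{\mathbb{R}_\ast^N}|v-c|^2\,d\mu_A,
\end{equation*}
and combining this with the two scaling identities above yields (\ref{LPGM}).

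For the attainment statement when $x^A$ is partial, I would use the second part of Theorem \ref{T1}, which supplies a non-constant $v_0\in X_A$ saturating (\ref{PGM}); the infimum $\inf_c\int|v_0-c|^2 d\mu_A$ is achieved at $c=\int v_0\,d\mu_A$, so $v_0$ also saturates the $\inf_c$ form. Setting $u_0(x):=v_0(x/\lambda)$ then gives a non-constant element of $X_{\lambda,A}$ for which equality holds in (\ref{LPGM}), by running the scaling identities in reverse. The argument is essentially a change-of-variables exercise rather than a genuine analytic obstacle; the one point requiring care is checking that $u\in X_{\lambda,A}$ if and only if $v\in X_A$, including the Neumann boundary condition on $\partial\mathbb{R}_\ast^N$, which is preserved by the dilation because $\lambda x\in\partial\mathbb{R}_\ast^N$ iff $x\in\partial\mathbb{R}_\ast^N$ and the outer normal $\overrightarrow{\eta}$ is unchanged.
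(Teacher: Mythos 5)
Your proposal is correct and follows essentially the same route as the paper: reduce to Theorem \ref{T1} via the dilation $u(x)=v(\lambda x)$, track the extra factor $\lambda^{2}$ picked up by the gradient term, and pass to the infimum over constants. The attainment part also matches the paper's argument, since rescaling the linear extremizers $a+\sum_{j>k}a_j x_j$ of (\ref{PGM}) is exactly the family the paper checks directly for the measure $d\mu_{A,\lambda}$.
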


In the same spirit, we also obtain the improved scale-dependent Poincar\'{e}
inequality with the monomial Gaussian weight. More clearly, let $\lambda>0$
and recall that $d\mu_{A,\lambda}=\frac{x^{A}e^{-\frac{1}{2\left\vert
\lambda\right\vert ^{2}}|x|^{2}}}{\int_{\mathbb{R}_{\ast}^{N}}x^{A}
e^{-\frac{1}{2\left\vert \lambda\right\vert ^{2}}|x|^{2}}dx}dx$ is the
Gaussian type measure with monomial weight. Then we have that

\begin{theorem}
\label{T4}For $\lambda>0$ and $u\in X_{\lambda,A}$, we have
\begin{align*}
&  \int_{\mathbb{R}_{\ast}^{N}}|\nabla u|^{2}d\mu_{A,\lambda}\\
&  \geq\frac{1}{\left\vert \lambda\right\vert ^{2}}\inf_{c,\overrightarrow{d}
}\int_{\mathbb{R}_{\ast}^{N}}\left(  \left\vert u-c\right\vert ^{2}+\right. \\
&  \left.  +\frac{1}{2}\left\vert u-c+\overrightarrow{d}\cdot\int
_{\mathbb{R}_{\ast}^{N}}xd\mu_{A}-c\left\vert \int_{\mathbb{R}_{\ast}^{N}
}xd\mu_{A}\right\vert ^{2}-\overrightarrow{d}\cdot x+c\left(  \int
_{\mathbb{R}_{\ast}^{N}}xd\mu_{A}\right)  \cdot x\right\vert ^{2}\right)
d\mu_{A,\lambda}.
\end{align*}

\end{theorem}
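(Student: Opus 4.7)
The plan is to reduce Theorem \ref{T4} to the unscaled case already established in Proposition \ref{P1}, via a rescaling argument. For $u\in X_{\lambda,A}$, I will set $v(y):=u(\lambda y)$. Since the dilation $y\mapsto\lambda y$ transports $d\mu_A$ to $d\mu_{A,\lambda}$, a direct change of variables yields
\[
\int_{\mathbb{R}_{\ast}^{N}}|\nabla v|^{2}\,d\mu_A=\lambda^{2}\int_{\mathbb{R}_{\ast}^{N}}|\nabla u|^{2}\,d\mu_{A,\lambda},\qquad \int F(v)\,d\mu_A=\int F(u)\,d\mu_{A,\lambda}
\]
for any $F$. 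In particular, $c_v:=\int v\,d\mu_A=\int u\,d\mu_{A,\lambda}$ and $\overrightarrow{d}_v:=\int v\,y\,d\mu_A=\tfrac{1}{\lambda}\int u\,x\,d\mu_{A,\lambda}$.

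Next, I will apply Proposition \ref{P1} to $v$, divide the resulting estimate by $\lambda^{2}$, and substitute $y=x/\lambda$. Writing $m_A:=\int y\,d\mu_A$, this produces
\[
\int|\nabla u|^{2}\,d\mu_{A,\lambda}\;\geq\;\frac{1}{\lambda^{2}}\int\Bigl(|u-c_v|^{2}+\tfrac{1}{2}\bigl|u-c_v+(\overrightarrow{d}_v-c_v m_A)\cdot(m_A-x/\lambda)\bigr|^{2}\Bigr)\,d\mu_{A,\lambda}.
\]
The right-hand side equals the Theorem \ref{T4} integrand evaluated at a specific pair $(c^*,\overrightarrow{d}^*)$: take $c^*=c_v$, choose $\overrightarrow{d}^*$ so that $\overrightarrow{d}^*-c^*m_A=(\overrightarrow{d}_v-c_v m_A)/\lambda$ (to align the coefficients of $x$), and if necessary shift $c^*$ by an explicit constant to absorb the residual discrepancy in the constant term. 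Since the infimum in Theorem \ref{T4} is bounded above by the value of the functional at $(c^*,\overrightarrow{d}^*)$, the claimed inequality will follow.

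The hard part will be the bookkeeping in this last identification: the rescaled integrand carries $(m_A-x/\lambda)$, whereas Theorem \ref{T4} is written with $(m_A-x)$. One must exploit the \emph{joint} infimum over $(c,\overrightarrow{d})$: scaling $\overrightarrow{d}^*-c^*m_A$ by $1/\lambda$ fixes the coefficient of $x$, while a compensating shift in $c^*$ matches the constant. An equivalent route is to first derive a scale-dependent version of Theorem \ref{T2} by the same rescaling, yielding
\[
\int|\nabla u|^{2}\,d\mu_{A,\lambda}-\frac{1}{\lambda^{2}}\int|u-c_v|^{2}\,d\mu_{A,\lambda}\;\geq\;\frac{1}{2}\int|\nabla u-\overrightarrow{a}|^{2}\,d\mu_{A,\lambda}
\]
for an appropriate vector $\overrightarrow{a}$, and then apply Theorem \ref{T3} to the gradient term to introduce the constant $c$ in the infimum, thereby generating the inner quadratic on the right-hand side of Theorem \ref{T4} in one shot.
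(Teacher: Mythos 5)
Your strategy is the same one the paper uses: apply Proposition \ref{P1} to $v(y)=u(\lambda y)$ and rescale, and your scaling bookkeeping is correct ($c_v=\int u\,d\mu_{A,\lambda}$, $\overrightarrow{d}_v=\tfrac1\lambda\int ux\,d\mu_{A,\lambda}$, the factor $\lambda^2$ on the gradient term). The problem is exactly the step you flag as ``the hard part'', and the fix you propose does not close it. Write $m_A:=\int_{\mathbb{R}_{\ast}^{N}}x\,d\mu_A$ and $\overrightarrow{e}:=\overrightarrow{d}-c\,m_A$; then the Theorem \ref{T4} integrand at parameters $(c,\overrightarrow{d})$ is
\[
|u-c|^{2}+\tfrac12\bigl|u-c-\overrightarrow{e}\cdot(x-m_A)\bigr|^{2},
\]
with the \emph{same} $c$ in both terms, while your rescaled lower bound is
\[
|u-c_v|^{2}+\tfrac12\bigl|u-c_v-\overrightarrow{E}\cdot(x-\lambda m_A)\bigr|^{2},\qquad \overrightarrow{E}=\tfrac1\lambda\bigl(\overrightarrow{d}_v-c_v m_A\bigr),
\]
i.e.\ the linear correction is centered at $\lambda m_A=\int x\,d\mu_{A,\lambda}$, not at $m_A$. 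Matching the coefficient of $x$ forces $\overrightarrow{e}=\overrightarrow{E}$, and then the constant parts of the second terms differ by $(\lambda-1)\,\overrightarrow{E}\cdot m_A$, which is nonzero in general because $m_A\neq0$ as soon as some $\alpha_i>0$. Your suggested ``compensating shift in $c^{*}$'' cannot absorb this: shifting $c$ repairs the second term only at the cost of moving the first term away from $|u-c_v|^{2}$, since the two terms share the single parameter $c$. Hence the rescaled right-hand side is \emph{not} the Theorem \ref{T4} integrand evaluated at any pair $(c^{*},\overrightarrow{d}^{*})$ when $\lambda\neq1$, and the inequality does not follow just by passing to the infimum.

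Your alternative route (rescaled Theorem \ref{T2} followed by Theorem \ref{T3} applied to $u-\overrightarrow{a}\cdot x$) runs into the same wall: Theorem \ref{T3} produces $\inf_{c'}\int|u-c'-\overrightarrow{a}\cdot x|^{2}d\mu_{A,\lambda}$ with a constant $c'$ that is decoupled from the constant $c_v$ in the first term, whereas in Theorem \ref{T4} the two constants are rigidly coupled through $c$, and the optimal $c'$ corresponds to a first-term constant different from $c_v$. What your rescaling argument does prove verbatim is the statement with $\int x\,d\mu_{A}$ replaced by $\int x\,d\mu_{A,\lambda}=\lambda\int x\,d\mu_A$ inside the second term; this recentered version is, in effect, what the paper's own proof produces when it writes the rescaled quantities with unspecified constants $C_A(\lambda,v)$, $\overrightarrow{D}_A(\lambda,v)$ before infimizing, and the discrepancy disappears in the classical case $A=0$ (where $m_A=0$, which is why Theorem \ref{T5} is unproblematic). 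So to complete your argument for the statement as printed you must either prove that recentered version suffices or add a genuine argument controlling the constant mismatch $(\lambda-1)\overrightarrow{E}\cdot m_A$; as written, the bookkeeping you defer is not mere bookkeeping.
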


Similarly, we can deduce the improved scale-dependent Poincar\'{e} inequality
with the Gaussian type measure. More precisely, let $d\mu_{\lambda}
=\frac{e^{-\frac{1}{2\left\vert \lambda\right\vert ^{2}}|x|^{2}}}
{\int_{\mathbb{R}^{N}}e^{-\frac{1}{2\left\vert \lambda\right\vert ^{2}}
|x|^{2}}dx}dx$ be the Gaussian type measure. Then we can establish the
following Poincar\'{e} inequality with $d\mu_{\lambda}$:

\begin{theorem}
\label{T5}For $\lambda>0$ and $u\in X_{\lambda,0}$, we have
\begin{align*}
&  \int_{\mathbb{R}^{N}}|\nabla u|^{2}d\mu_{\lambda}\\
&  \geq\frac{1}{\left\vert \lambda\right\vert ^{2}}\inf_{c,\overrightarrow{d}
}\int_{\mathbb{R}^{N}}\left(  \left\vert u-c\right\vert ^{2}+\left\vert
u-c-\overrightarrow{d}\cdot x\right\vert ^{2}\right)  d\mu_{\lambda}.
\end{align*}
Moreover, the equality can be attained by non-linear functions.
\end{theorem}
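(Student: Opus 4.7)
The plan is to reduce Theorem \ref{T5} to the unit-scale case $\lambda=1$ and then invoke Theorem \ref{P2}, which already supplies the sharp constant $1$ in front of the second-moment correction. For $u\in X_{\lambda,0}$ set $v(y):=u(\lambda y)$; the change of variables $x=\lambda y$ identifies the pushforward of $d\mu_\lambda$ under $x\mapsto x/\lambda$ with $d\mu_0$, so $v\in X_0$, and one records the scaling identities
\[
\int_{\mathbb{R}^N}|\nabla u|^2 d\mu_\lambda=\frac{1}{\lambda^2}\int_{\mathbb{R}^N}|\nabla v|^2 d\mu_0,\quad \int_{\mathbb{R}^N}u\,d\mu_\lambda=\int_{\mathbb{R}^N}v\,d\mu_0,\quad \int_{\mathbb{R}^N}ux\,d\mu_\lambda=\lambda\int_{\mathbb{R}^N}vy\,d\mu_0,
\]
together with the corresponding substitutions in the $|u-c|^2$ and $|u-c-\overrightarrow{d}\cdot x|^2$ terms (where $\overrightarrow d$ transforms into $\lambda\overrightarrow d$ in $y$-coordinates).

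Applying Theorem \ref{P2} to $v$ and translating back through these identities yields the explicit inequality
\[
\int_{\mathbb{R}^N}|\nabla u|^2 d\mu_\lambda\geq \frac{1}{\lambda^2}\int_{\mathbb{R}^N}\bigl|u-\overline{u}\bigr|^2 d\mu_\lambda+\frac{1}{\lambda^2}\int_{\mathbb{R}^N}\bigl|u-\overline{u}-\overrightarrow{m}\cdot x\bigr|^2 d\mu_\lambda,
\]
where $\overline{u}:=\int_{\mathbb{R}^N}u\,d\mu_\lambda$ and $\overrightarrow{m}:=\tfrac{1}{\lambda^2}\int_{\mathbb{R}^N}ux\,d\mu_\lambda$. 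Plugging $c=\overline{u}$ and $\overrightarrow{d}=\overrightarrow{m}$ as trial values in the infimum on the right-hand side of Theorem \ref{T5} then shows that this infimum is bounded above by the explicit sum, which proves the claimed inequality. A direct computation, expanding $\int(|u-c|^2+|u-c-\overrightarrow d\cdot x|^2)d\mu_\lambda$ as a quadratic in $(c,\overrightarrow d)$ and using the Gaussian moment identities $\int x\,d\mu_\lambda=\overrightarrow 0$ and $\int x_ix_j\,d\mu_\lambda=\lambda^2\delta_{ij}$ to annihilate the cross terms, also shows that $(\overline u,\overrightarrow m)$ is the unique minimizer, so the two right-hand sides actually coincide.

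For the attainability claim, equality in Theorem \ref{P2} is realized by non-linear functions built from the second Hermite polynomials (eigenfunctions of the Ornstein--Uhlenbeck generator at eigenvalue $2$); pulling any such extremizer $v$ back through $v(y)=u(\lambda y)$ yields a non-linear $u\in X_{\lambda,0}$ at which every inequality above becomes an equality, and hence the optimum in Theorem \ref{T5} is attained. The main obstacle I anticipate is the careful tracking of the $\lambda$-scalings of the cross terms when transporting the Theorem \ref{P2} inequality from $v$ back to $u$, and verifying that the optimization in $(c,\overrightarrow d)$ is genuinely solved by $(\overline u,\overrightarrow m)$; both points ultimately rest on the Gaussian moment identities just mentioned, so once those are in hand the rest of the argument is essentially bookkeeping.
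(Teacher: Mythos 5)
Your proposal is correct and follows essentially the same route as the paper: apply the sharp constant-$1$ refinement of the Gaussian Poincar\'e inequality (Theorem \ref{P2}) at unit scale and transfer it to $d\mu_\lambda$ by the dilation $v(y)=u(\lambda y)$, then realize equality by pulling back degree-two Hermite (eigenvalue-$2$) eigenfunctions, which is exactly the paper's family $u_\lambda=\sum_i a_i(x_i^2-\lambda^2)$. Your extra observation that $(\overline{u},\overrightarrow{m})$ is the exact minimizer of the $(c,\overrightarrow{d})$-infimum is a correct minor strengthening of the paper's argument, which only uses these values as competitors.
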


Our next goal is use the scale-dependent Poincar\'{e} inequality with the
monomial Gaussian weight to answer Question 2 and establish the stability of
the HUP with monomial weight, in the spirit of \cite{CFLL24}. Indeed, by
combining the identity in Proposition \ref{P0} and the scale-dependent
Poincar\'{e} inequality with the monomial Gaussian weight (\ref{LPGM}), we can
provide an affirmative answer to Question 2. More precisely, we will prove
that with the distance function
\[
d_{A}\left(  u,E_{HUPA}\right)  :=\inf_{c,\lambda\neq0}\left(  \int
_{\mathbb{R}_{\ast}^{N}}\left\vert u-ce^{-\frac{1}{2\lambda^{2}}|x|^{2}
}\right\vert ^{2}x^{A}dx\right)  ^{\frac{1}{2}},
\]
one has the following stability result for HUP with monomial weight:

\begin{theorem}
\label{T1.1}Let $u\in S_{A}$. Then
\begin{equation}
\delta_{A}\left(  u\right)  \geq d_{A}^{2}\left(  u,E_{HUPA}\right)  .
\label{SHM}%
\end{equation}
Moreover, if $x^{A}$ is partial, then (\ref{SHM}) can be attained by
nontrivial functions $u\notin E_{HUPA}$.
\end{theorem}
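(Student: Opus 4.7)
The plan follows the strategy used in \cite{CFLL24} to prove Theorem \ref{A}: I combine the HUP identity of Proposition \ref{P0} with the scale-dependent Poincar\'{e} inequality of Theorem \ref{T3}. Fix $u\in S_{A}$ with $u\neq 0$, let $\lambda=\bigl(\int_{\mathbb{R}_{\ast}^{N}}|u|^{2}|x|^{2}x^{A}dx\bigm/\int_{\mathbb{R}_{\ast}^{N}}|\nabla u|^{2}x^{A}dx\bigr)^{1/4}$, and set $v:=u\,e^{|x|^{2}/(2\lambda^{2})}$. Proposition \ref{P0} then rewrites the deficit as
\[
\delta_{A}(u)=\frac{\lambda^{2}}{2}\int_{\mathbb{R}_{\ast}^{N}}|\nabla v|^{2}\,e^{-|x|^{2}/\lambda^{2}}x^{A}\,dx.
\]

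Next, rewriting $e^{-|x|^{2}/\lambda^{2}}=e^{-|x|^{2}/(2\sigma^{2})}$ with $\sigma=\lambda/\sqrt{2}$, I apply the scale-dependent Poincar\'{e} inequality \eqref{LPGM} to $v$ at scale $\sigma$; after clearing the normalization constant of $d\mu_{A,\sigma}$ this reads
\[
\int_{\mathbb{R}_{\ast}^{N}}|\nabla v|^{2}e^{-|x|^{2}/\lambda^{2}}x^{A}\,dx\;\geq\;\frac{2}{\lambda^{2}}\inf_{c\in\mathbb{R}}\int_{\mathbb{R}_{\ast}^{N}}|v-c|^{2}e^{-|x|^{2}/\lambda^{2}}x^{A}\,dx.
\]
Plugging this into the identity and using the crucial rewriting $(v-c)\,e^{-|x|^{2}/(2\lambda^{2})}=u-c\,e^{-|x|^{2}/(2\lambda^{2})}$ to absorb the exponential back into the square produces
\[
\delta_{A}(u)\;\geq\;\inf_{c\in\mathbb{R}}\int_{\mathbb{R}_{\ast}^{N}}\bigl|u-c\,e^{-|x|^{2}/(2\lambda^{2})}\bigr|^{2}x^{A}\,dx\;\geq\;d_{A}^{2}(u,E_{HUPA}),
\]
where the last step uses that our particular $\lambda$ is one admissible scale in the infimum defining $d_{A}(u,E_{HUPA})$. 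This proves \eqref{SHM}.

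For the attainability claim under the partial-weight hypothesis, I would take a non-constant extremal $v^{\ast}$ of the scale-dependent Poincar\'{e} inequality at some scale $\lambda_{0}>0$ (supplied by the second part of Theorem \ref{T3}; concretely, $v^{\ast}=c^{\ast}+\vec{a}\cdot x^{\prime}$ with $x^{\prime}$ ranging over the coordinates on which $x^{A}$ places no weight), and set $u^{\ast}:=v^{\ast}\,e^{-|x|^{2}/(2\lambda_{0}^{2})}$. Since $v^{\ast}$ is non-constant, $u^{\ast}$ is not a multiple of a Gaussian and therefore lies outside $E_{HUPA}$.

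The main obstacle is a compatibility check inside this attainability argument: equality propagates along the chain above only if the special $\lambda$ produced by the HUP identity from $u^{\ast}$ coincides with the scale $\lambda_{0}$ chosen in constructing $v^{\ast}$, and simultaneously the infimum over $\mu\neq 0$ in $d_{A}(u^{\ast},E_{HUPA})$ is attained exactly at $\mu=\lambda_{0}$. I expect to handle both conditions by exploiting the dilation invariance $u(x)\mapsto u(tx)$, which rescales $\lambda$ and the minimizing $\mu$ by the same factor, so that a suitable rescaling of the extremal synchronises the two scales; the residual $c$-optimality then reduces to an orthogonality relation between $v^{\ast}-c^{\ast}$ and the constants in $L^{2}(d\mu_{A,\lambda_{0}})$, which is precisely the relation built into the Poincar\'{e} extremal, in direct parallel with the attainability analysis of \cite{CFLL24}.
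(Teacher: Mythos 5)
Your proof of the inequality \eqref{SHM} is correct and is essentially the paper's own argument: apply Proposition \ref{P0} with the specific $\lambda$, view $e^{-|x|^{2}/\lambda^{2}}$ as the monomial Gaussian weight at scale $\lambda/\sqrt{2}$, invoke Theorem \ref{T3}, absorb the exponential back into the square, and relax the infimum to all scales. No issues there.

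The attainability part, however, is left as a plan rather than a proof, and the plan as stated does not close the two conditions you yourself flag. Dilation invariance only rescales the pair (HUP-identity scale, minimizing scale in $d_{A}$) simultaneously; it does not show that for your $u^{\ast}=(c^{\ast}+\vec a\cdot x')e^{-|x|^{2}/(2\lambda_{0}^{2})}$ the identity scale actually equals $\lambda_{0}$, nor that the joint infimum over $(c,\mu)$ in $d_{A}$ is attained at $\mu=\lambda_{0}$ — your orthogonality remark only handles the optimization in $c$ at the fixed scale, not the optimization in $\mu$, and for $c^{\ast}\neq 0$ the cross term $\int c^{\ast}e^{-|x|^{2}/(2\lambda_0^{2})}\,e^{-|x|^{2}/(2\mu^{2})}x^{A}dx$ does not vanish, so one must genuinely verify that $\mu\mapsto I(\mu)^{2}/J(\mu)$ (with $I,J$ the obvious Gaussian-overlap integrals) is maximized at $\mu=\lambda_{0}$; this reduces to $\bigl(\tfrac{2\mu}{\mu^{2}+1}\bigr)^{D}\leq 1$, a computation absent from your sketch. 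The paper sidesteps all of this by the concrete choice $u=x_{N}e^{-|x|^{2}/2}$ (i.e.\ $c^{\ast}=0$, $\lambda_{0}=1$): a direct Gaussian computation shows $\int|\nabla u|^{2}x^{A}dx=\int|u|^{2}|x|^{2}x^{A}dx$ so the identity scale is $1$, Proposition \ref{P0} gives $\delta_{A}(u)=\tfrac12\int x^{A}e^{-|x|^{2}}dx$, and since $\alpha_{N}=0$ the cross term in $\int|u-ce^{-|x|^{2}/(2\mu^{2})}|^{2}x^{A}dx$ vanishes by oddness in $x_{N}$ for every $\mu$, so the infimum is at $c=0$ and equals $\delta_{A}(u)$. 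Either adopt that explicit example (simplest), or supply the missing verifications for your general family; as written, the attainability claim is not yet proved.
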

Here, we recall that we are using the deficit function
\[
\delta_{A}\left(  u\right)  =\left(  \int_{\mathbb{R}_{\ast}^{N}}|\nabla
u|^{2}x^{A}dx\right)  ^{\frac{1}{2}}\left(  \int_{\mathbb{R}_{\ast}^{N}
}|u|^{2}|x|^{2}x^{A}dx\right)  ^{\frac{1}{2}}-\frac{D}{2}\int_{\mathbb{R}
_{\ast}^{N}}|u|^{2}x^{A}dx.
\]

Moreover, we can use the improved scale-dependent Poincar\'{e} inequality to
obtain the improved stability of the HUP with monomial weight. More precisely,
let
\[
F:=\left\{  \left(  \alpha+\overrightarrow{\gamma}\cdot x\right)
e^{-\beta\left\vert x\right\vert ^{2}}:\alpha\in\mathbb{R}\text{,
}\overrightarrow{\gamma}\in\mathbb{R}^{N}\text{, }\beta>0\right\}
\]
and define
\[
\widetilde{d}_{A}\left(  u,F\right)  :=\inf_{c,\overrightarrow{d},\lambda
\neq0}\left(  \int_{\mathbb{R}_{\ast}^{N}}\left\vert u-\left(
c+\overrightarrow{d}\cdot x\right)  e^{-\frac{1}{2\lambda^{2}}|x|^{2}
}\right\vert ^{2}x^{A}dx\right)  ^{\frac{1}{2}}.
\]
Then we will prove the following improved stability of HUP with monomial weight:

\begin{theorem}
\label{T6}Let $u\in S_{A}$. Then
\[
\delta_{A}\left(  u\right)  -d_{A}^{2}\left(  u,E_{HUPA}\right)  \geq\frac
{1}{2}\widetilde{d}_{A}\left(  u,F\right)  .
\]

\end{theorem}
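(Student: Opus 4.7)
The plan is to parallel the proof of Theorem \ref{T1.1} (itself modeled on \cite{CFLL24}), replacing the scale-dependent Poincar\'{e} inequality Theorem \ref{T3} by its improvement Theorem \ref{T4}. Fix $u\in S_{A}$ with $u\neq 0$ and set $\lambda=\bigl(\int_{\mathbb{R}_{\ast}^{N}}|u|^{2}|x|^{2}x^{A}\,dx/\int_{\mathbb{R}_{\ast}^{N}}|\nabla u|^{2}x^{A}\,dx\bigr)^{1/4}$. Proposition \ref{P0} gives
\[
\delta_{A}(u)=\frac{\lambda^{2}}{2}\int_{\mathbb{R}_{\ast}^{N}}|\nabla v|^{2}\,e^{-|x|^{2}/\lambda^{2}}x^{A}\,dx,\qquad v:=u\,e^{|x|^{2}/(2\lambda^{2})}.
\]
The weight $e^{-|x|^{2}/\lambda^{2}}x^{A}$ is proportional to $d\mu_{A,\mu}$ at scale $\mu=\lambda/\sqrt{2}$, so after applying Theorem \ref{T4} to $v$ at this scale, multiplying by the normalizing constant $\int_{\mathbb{R}_{\ast}^{N}}x^{A}e^{-|x|^{2}/\lambda^{2}}\,dx$ and using $\lambda^{2}/\mu^{2}=2$, one obtains
\[
\delta_{A}(u)\ge \inf_{c,\overrightarrow{d}}\int_{\mathbb{R}_{\ast}^{N}}\Bigl(|v-c|^{2}+\tfrac{1}{2}\,|v-\Phi(x;c,\overrightarrow{d})|^{2}\Bigr)e^{-|x|^{2}/\lambda^{2}}x^{A}\,dx,
\]
where $\Phi(x;c,\overrightarrow{d})$ denotes the affine function of $x$ appearing inside the second squared modulus on the right-hand side of Theorem \ref{T4}.

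Next I would split the infimum using the elementary bound $\inf_{c,\overrightarrow{d}}[A(c)+B(c,\overrightarrow{d})]\ge \inf_{c}A(c)+\inf_{c,\overrightarrow{d}}B(c,\overrightarrow{d})$ and undo the substitution $v=ue^{|x|^{2}/(2\lambda^{2})}$. Because $v-c=(u-ce^{-|x|^{2}/(2\lambda^{2})})e^{|x|^{2}/(2\lambda^{2})}$ (and analogously $v-\Phi=(u-\Phi e^{-|x|^{2}/(2\lambda^{2})})e^{|x|^{2}/(2\lambda^{2})}$), the exponential cancels inside each squared modulus, leaving
\[
\delta_{A}(u)\ge \inf_{c}\int_{\mathbb{R}_{\ast}^{N}}\bigl|u-ce^{-|x|^{2}/(2\lambda^{2})}\bigr|^{2}x^{A}\,dx+\tfrac{1}{2}\inf_{c,\overrightarrow{d}}\int_{\mathbb{R}_{\ast}^{N}}\bigl|u-\Phi(x;c,\overrightarrow{d})\,e^{-|x|^{2}/(2\lambda^{2})}\bigr|^{2}x^{A}\,dx.
\]
Enlarging each infimum to let $\lambda$ vary as well, the first summand is at least $d_{A}^{2}(u,E_{HUPA})$ directly from the definition of $d_{A}$.

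For the second summand, set $p=\int_{\mathbb{R}_{\ast}^{N}}x\,d\mu_{A}$. A short computation shows $\Phi(x;c,\overrightarrow{d})=\tilde{c}+\tilde{d}\cdot x$ with
\[
\tilde{c}=c(1+|p|^{2})-\overrightarrow{d}\cdot p,\qquad \tilde{d}=\overrightarrow{d}-c p.
\]
The linear change $(c,\overrightarrow{d})\mapsto(\tilde{c},\tilde{d})$ on $\mathbb{R}\times\mathbb{R}^{N}$ has Schur complement $1$, so it is invertible (determinant $1$) and $(\tilde{c},\tilde{d})$ sweeps out all of $\mathbb{R}\times\mathbb{R}^{N}$. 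Hence the second infimum equals $\inf_{\tilde{c},\tilde{d}}\int\bigl|u-(\tilde{c}+\tilde{d}\cdot x)e^{-|x|^{2}/(2\lambda^{2})}\bigr|^{2}x^{A}\,dx$, and enlarging once more to let $\lambda$ vary produces $\ge \widetilde{d}_{A}^{2}(u,F)$.

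The main obstacle I anticipate is the bookkeeping around the rescaling $\mu=\lambda/\sqrt{2}$ together with the bijectivity of $(c,\overrightarrow{d})\mapsto(\tilde{c},\tilde{d})$: one must carefully track the normalizing constant $Z_{A,\mu}$ so that the prefactor $\lambda^{2}/2$ cancels cleanly against $1/\mu^{2}$ (preserving the coefficient $\tfrac{1}{2}$ in front of the improvement term), and verify that the affine change of variables genuinely parametrizes the full family $F=\{(\alpha+\overrightarrow{\gamma}\cdot x)e^{-\beta|x|^{2}}\}$ rather than a proper subfamily. Once these checks are in place, combining the two lower bounds yields the claimed inequality $\delta_{A}(u)\ge d_{A}^{2}(u,E_{HUPA})+\tfrac{1}{2}\,\widetilde{d}_{A}^{2}(u,F)$.
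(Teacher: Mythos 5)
Your proposal is correct and follows essentially the same route as the paper's proof: Proposition \ref{P0} with the same choice of $\lambda$, Theorem \ref{T4} applied at the rescaled Gaussian parameter, splitting the infimum, and recognizing the complicated expression in Theorem \ref{T4} as an affine function $\tilde{c}+\tilde{d}\cdot x$ so that the second term is bounded below by the distance to $F$ (your check that $(c,\overrightarrow{d})\mapsto(\tilde{c},\tilde{d})$ is a bijection is more care than is strictly needed, since for the lower bound it suffices that each such affine function belongs to the family over which $\widetilde{d}_{A}$ is defined). Note that, exactly as in the paper's own argument, what you actually establish is $\delta_{A}(u)-d_{A}^{2}(u,E_{HUPA})\geq\frac{1}{2}\widetilde{d}_{A}^{\,2}(u,F)$, i.e.\ the squared form, so the missing square in the statement of Theorem \ref{T6} appears to be a typo rather than a gap in your proof.
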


Finally, we will apply the refined Poincar\'{e} inequality (Theorem \ref{T5})
to provide an affirmative answer to Question 1. More precisely, recall that
$d_{1}(u,A)=\inf_{v\in A}\left\{  \left\Vert u-v\right\Vert _{2}\right\}  $
and define
\[
d_{2}(u,F):=\inf_{c,\overrightarrow{d},\lambda\neq0}\left(  \int
_{\mathbb{R}^{N}}\left\vert u-ce^{-\frac{1}{2\lambda^{2}}|x|^{2}}\right\vert
^{2}+\left\vert u-\left(  c+\overrightarrow{d}\cdot x\right)  e^{-\frac
{1}{2\lambda^{2}}|x|^{2}}\right\vert ^{2}dx\right)  ^{\frac{1}{2}}.
\]
Then we will prove the following result that can be considered as a stability
version of the sharp stability of the HUP established in \cite{CFLL24}:

\begin{theorem}
\label{T7}For all $u\in S_{0}:$
\[
\delta_{1}\left(  u\right)  \geq d_{2}^{2}(u,F).
\]
As a consequence
\[
\delta_{1}\left(  u\right)  -d_{1}^{2}(u,E_{HUP})\geq d_{1}^{2}(u,F).
\]

\end{theorem}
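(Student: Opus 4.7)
The plan is to combine the Heisenberg identity (\ref{HUPI}) with the refined scale-dependent Gaussian Poincar\'{e} inequality of Theorem \ref{T5}: the right-hand side of (\ref{HUPI}) is precisely a weighted Dirichlet energy to which Theorem \ref{T5} can be applied once the scaling parameters are matched.

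Concretely, for a nontrivial $u\in S_{0}$, I set $\lambda=\left(\int_{\mathbb{R}^{N}}|u|^{2}|x|^{2}\,dx\big/\int_{\mathbb{R}^{N}}|\nabla u|^{2}\,dx\right)^{1/4}$ and $v=ue^{|x|^{2}/(2\lambda^{2})}$, so that (\ref{HUPI}) reads $\delta_{1}(u)=\frac{\lambda^{2}}{2}\int_{\mathbb{R}^{N}}|\nabla v|^{2}e^{-|x|^{2}/\lambda^{2}}\,dx$. I then apply Theorem \ref{T5} with parameter $\lambda/\sqrt{2}$, so that the density of $d\mu_{\lambda/\sqrt{2}}$ is exactly $e^{-|x|^{2}/\lambda^{2}}$ up to a normalizing constant that cancels on both sides of the Poincar\'{e} inequality. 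Multiplying the resulting inequality by $\lambda^{2}/2$ and folding the weight $e^{-|x|^{2}/\lambda^{2}}=\bigl(e^{-|x|^{2}/(2\lambda^{2})}\bigr)^{2}$ into the squared integrands via the identity $ve^{-|x|^{2}/(2\lambda^{2})}=u$ yields
\[
\delta_{1}(u)\geq\inf_{c,\overrightarrow{d}}\int_{\mathbb{R}^{N}}\left(\left|u-ce^{-|x|^{2}/(2\lambda^{2})}\right|^{2}+\left|u-(c+\overrightarrow{d}\cdot x)e^{-|x|^{2}/(2\lambda^{2})}\right|^{2}\right)dx,
\]
which is at least $d_{2}^{2}(u,F)$ because $d_{2}$ further infimizes over $\lambda\neq 0$. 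This proves $\delta_{1}(u)\geq d_{2}^{2}(u,F)$.

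For the consequence, I observe that for every admissible triple $(c,\overrightarrow{d},\lambda)$ the function $ce^{-|x|^{2}/(2\lambda^{2})}$ belongs to $E_{HUP}$ and $(c+\overrightarrow{d}\cdot x)e^{-|x|^{2}/(2\lambda^{2})}$ belongs to $F$; hence the two $L^{2}$-pieces in the definition of $d_{2}^{2}(u,F)$ are bounded below by $d_{1}^{2}(u,E_{HUP})$ and $d_{1}^{2}(u,F)$ respectively, and taking the infimum of their sum gives $d_{2}^{2}(u,F)\geq d_{1}^{2}(u,E_{HUP})+d_{1}^{2}(u,F)$, which combined with the first inequality produces the claimed consequence. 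The only delicate point in the whole argument is the bookkeeping in the first step---matching the variance of $d\mu_{\lambda}$ to the weight $e^{-|x|^{2}/\lambda^{2}}$ that emerges from the HUP identity, and tracking the $\lambda^{2}/2$ prefactor so that the two infimum terms produced by Theorem \ref{T5} line up exactly with the two quadratic pieces defining $d_{2}^{2}(u,F)$; once this is verified the remainder of the proof is pure substitution.
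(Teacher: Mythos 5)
Your proposal is correct and follows essentially the same route as the paper: apply the HUP identity (\ref{HUPI}) with $\lambda=\bigl(\int|u|^{2}|x|^{2}dx/\int|\nabla u|^{2}dx\bigr)^{1/4}$, invoke Theorem \ref{T5} at the matching scale (your $\lambda/\sqrt{2}$ bookkeeping is exactly the cancellation the paper uses implicitly), fold the weight $e^{-|x|^{2}/\lambda^{2}}$ into the squared terms to get $\delta_{1}(u)\geq d_{2}^{2}(u,F)$, and then split the infimum of the sum into the sum of infima to obtain the consequence. No gaps.
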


The paper is organized as follows: In subsection 2.1, we apply the method of
Curvature-Dimension condition to study the Poincar\'{e} inequality with
monomial Gaussian weight, and then use the duality approach to establish
its improvement and stability. In subsection 2.2, we use the Hermite spectral
method to derive sharp versions of the stability of the classical Poincar\'{e}
inequality with Gaussian weights. In subsection 2.3, we set up the
scale-dependent Poincar\'{e} inequality with Gaussian type measures. Finally,
in Section 3, we apply the Poincar\'{e} inequalities proved in Section 2 to
investigate several versions of the stability of the HUP. In particular, we
provide affirmative answers of Question 1 and Question 2 in Section 3.

\section{Poincar\'{e} type inequalities with Gaussian weights and the
stability}

\subsection{Poincar\'{e} inequality with monomial Gaussian weights and its
stability-Proofs of Theorem \ref{T1}, Theorem \ref{T2} and Proposition
\ref{P1}}

\phantom{???}

In this subsection, we will use the Bakry-\'{E}mery's $\Gamma$-calculus to establish the
Poincar\'{e} inequality with monomial Gaussian weight. For a very detailed study and various applications of the Bakry-\'{E}mery curvature-dimension criterion and $\Gamma$-calculus, we refer the interested reader to \cite{BGL14}.

Let $d\mu
_{A}=\frac{x^{A}e^{-\frac{1}{2}|x|^{2}}}{\int_{\mathbb{R}_{\ast}^{N} }
x^{A}e^{-\frac{1}{2}|x|^{2}}dx}dx$ be the Gaussian measure with monomial
weight. Then we have the following Poincar\'{e} inequality with
monomial-Gaussian measure $d\mu_{A}:$
\[
\int_{\mathbb{R}_{\ast}^{N}}|\nabla u|^{2}d\mu_{A}\geq\int_{\mathbb{R}_{\ast
}^{N}}\left\vert u-\int_{\mathbb{R}_{\ast}^{N}}ud\mu_{A}\right\vert ^{2}
d\mu_{A}.
\]

\begin{proof}
[Proof of Theorem \ref{T1}]Consider the diffusion operator
\[
\mathbf{L}_{A}:=\Delta-x\cdot\nabla+\Tilde{x}_{A}\cdot\nabla
\]
associated to the Gaussian measure with monomial weight $d\mu_{A}$. Here we
denote $\Tilde{x}_{A}=\left(  \frac{\alpha_{1}}{x_{1}},\frac{\alpha_{2}}
{x_{2}},\dots,\frac{\alpha_{N}}{x_{N}}\right)  $. Note that $-\mathbf{L}_{A}$
is symmetric and $\geq0$ with respect to the measure $\mu_{A}$ on $N_{\ast}$.
The Dirichlet form can be defined as follows:
\[
\mathcal{E}_{A}(u,v):=\int_{\mathbb{R}_{\ast}^{N}}\nabla u\cdot\nabla
v\ d\mu_{A}.
\]
By integration by part, we get
\begin{align*}
\mathcal{E}_{A}(u,v)  &  =\int_{\mathbb{R}_{\ast}^{N}}\nabla u\cdot\nabla
v\ d\mu_{A}\\
&  =-\int_{\mathbb{R}_{\ast}^{N}}u\operatorname{div}\left(  \nabla
v\ x^{A}e^{-\frac{1}{2}|x|^{2}}\right)  dx\\
&  =-\int_{\mathbb{R}_{\ast}^{N}}u\mathbf{L}_{A}vd\mu_{A}\\
&  =-\int_{\mathbb{R}_{\ast}^{N}}v\mathbf{L}_{A}ud\mu_{A}.
\end{align*}
This implies the invariance property of $\mathbf{L}_{A}:$
\[
\int_{\mathbb{R}_{\ast}^{N}}\mathbf{L}_{A}ud\mu_{A}=0\text{ }\forall u\in
N_{\ast}\text{.}
\]
We then can extend $-\mathbf{L}_{A}$ to a nonnegative self-adjoint operator on
$X_{A}$, which we still denote by $-\mathbf{L}_{A}$.

Next we define the associated \textquotedblleft carr\'{e} du
champ\textquotedblright\ operator $\Gamma_{A}:$
\[
\Gamma_{A}(u,v):=\frac{1}{2}\left[  \mathbf{L}_{A}\left(  uv\right)
-u\mathbf{L}_{A}v-v\mathbf{L}_{A}u\right]  .
\]
Note that
\[
\int_{\mathbb{R}_{\ast}^{N}}\Gamma_{A}(u,v)d\mu_{A}=-\int_{\mathbb{R}_{\ast
}^{N}}u\mathbf{L}_{A}vd\mu_{A}=\mathcal{E}_{A}(u,v).
\]
In particular,
\begin{align*}
\Gamma_{A}(u)  &  :=\Gamma_{A}(u,u)=\frac{1}{2}\left(  \mathbf{L}_{A}%
(u^{2})-u\mathbf{L}_{A}u-u\mathbf{L}_{A}u\right) \\
&  =\sum_{i}|\partial_{i}u|^{2}+{u\ \partial_{ii}u}+\sum_{j}{\frac{\alpha_{j}%
}{x_{j}}\ u\ \partial_{j}u}-\sum_{j}{x_{j}\ u\ \partial_{j}u}\\
&  -\sum_{i}{u\ \partial_{ii}u}+\sum_{i}{u\ x_{i}\partial_{i}u}-\sum
_{i}{u\ \frac{\alpha_{i}}{x_{i}}\partial_{i}u}\\
&  =\left\vert \nabla u\right\vert ^{2}.%
\end{align*}
We also define the \textquotedblleft carr\'{e} du champ
it\'{e}r\'{e}\textquotedblright\ operator $\Gamma_{2}$ by:
\[
\Gamma_{2}(u,v):=\frac{1}{2}\left(  \mathbf{L}_{A}\Gamma_{A}(u,v)-\Gamma
_{A}(u,\mathbf{L}_{A}v)-\Gamma_{A}(\mathbf{L}_{A}u,v)\right)  .
\]
Then,
\[
\int_{\mathbb{R}_{\ast}^{N}}\Gamma_{2}(u,v)d\mu_{A}=\int_{\mathbb{R}_{\ast
}^{N}}\mathbf{L}_{A}u\mathbf{L}_{A}vd\mu_{A}.
\]
Note that for all $u\in N_{\ast}:$
\begin{align*}
\Gamma_{2}(u)  &  :=\Gamma_{2}(u,u)=\frac{1}{2}\left(  \mathbf{L}_{A}%
\Gamma_{A}(u)-\Gamma_{A}(u,\mathbf{L}_{A}u)-\Gamma_{A}(\mathbf{L}%
_{A}u,u)\right) \\
&  =\sum_{i,j}|\partial_{ij}u|^{2}+{\partial_{j}u\partial_{jii}u}-\sum
_{i,j}{x_{j}\ \partial_{i}u\ \partial_{ij}u}+\sum_{i,j}{\frac{\alpha_{j}%
}{x_{j}}\ \partial_{i}u\ \partial_{ij}u}\\
&  -\sum_{i,j}{\partial_{j}u\ \partial_{iij}u}+\sum_{i}|\partial_{i}%
u|^{2}+\sum_{i,j}{x_{i}\ \partial_{j}u\ \partial_{ij}u}+\sum_{i}|\partial
_{i}u|^{2}\frac{\alpha_{i}}{x_{i}^{2}}-\sum_{i,j}{\frac{a_{i}}{x_{i}}%
\partial_{j}u\ \partial_{ij}u}\\
&  =||\nabla^{2}u||_{F}^{2}+|\nabla u|^{2}+\left\vert \widetilde{\nabla}%
_{A}u\right\vert ^{2}%
\end{align*}
where
\[
\widetilde{\nabla}_{A}u=\left(  \frac{\sqrt{\alpha_{1}}}{x_{1}}\partial
_{1}u,\dots,\frac{\sqrt{\alpha_{N}}}{x_{N}}\partial_{N}u\right)
\]
and%
\[
||A||_{F}=\sqrt{\sum_{i,j}|a_{ij}|^{2}}%
\]
is the Frobenius norm of the matrix $A$.

Since
\begin{align*}
\Gamma_{2}(u)  &  =||\nabla^{2}u||_{F}^{2}+|\nabla u|^{2}+\left\vert
\widetilde{\nabla}_{A}u\right\vert ^{2}\\
&  \geq\Gamma_{A}(u)=\left\vert \nabla u\right\vert ^{2},
\end{align*}
we have that the probability measure $\frac{x^{A}e^{-\frac{1}{2}|x|^{2}}}{\int
_{\mathbb{R}_{\ast}^{N}}x^{A}e^{-\frac{1}{2}|x|^{2}}dx}dx$ satisfies the
Curvature-Dimention condition $CD\left(  1,\infty\right)  $ \cite[Definition
1.16.1]{BGL14}. By \cite[Proposition 4.8.1]{BGL14}, we obtain the Poincar\'{e}
inequality with monomial-Gaussian measure $d\mu_{A}$ with constant $1$, that
is
\[
\int_{\mathbb{R}_{\ast}^{N}}|\nabla u|^{2}d\mu_{A}\geq\int_{\mathbb{R}_{\ast
}^{N}}\left\vert u-\int_{\mathbb{R}_{\ast}^{N}}ud\mu_{A}\right\vert ^{2}%
d\mu_{A}.
\]

Now, assume that $x^{A}$ is partial.\ WLOG, let $1\leq k<N$ and assume that
$x^{A}=x_{1}^{\alpha_{1}}...x_{k}^{\alpha_{k}}$, $\alpha_{i}\geq0$. Let
$u=a+\sum\limits_{j=k+1}^{N}a_{j}x_{j}$. Then since $\int\limits_{-\infty
}^{\infty}x_{j}e^{-\frac{1}{2}x_{j}^{2}}dx_{j}=0$, we have $\int
_{\mathbb{R}_{\ast}^{N}}ud\mu_{A}=a$. Also,%
\[
\int_{\mathbb{R}_{\ast}^{N}}\left\vert u-\int_{\mathbb{R}_{\ast}^{N}}ud\mu
_{A}\right\vert ^{2}d\mu_{A}=\sum\limits_{j=k+1}^{N}a_{j}^{2}%
\]
On the other hand, since $\nabla u=\sum\limits_{j=k+1}^{N}a_{j}\overrightarrow
{e}_{j}$, we have
\[
\int_{\mathbb{R}_{\ast}^{N}}\left\vert \nabla u\right\vert ^{2}d\mu_{A}%
=\sum\limits_{j=k+1}^{N}a_{j}^{2}.
\]
Therefore
\[
\int_{\mathbb{R}_{\ast}^{N}}\left\vert \nabla u\right\vert ^{2}d\mu_{A}%
=\int_{\mathbb{R}_{\ast}^{N}}\left\vert u-\int_{\mathbb{R}_{\ast}^{N}}%
ud\mu_{A}\right\vert ^{2}d\mu_{A}.
\]

\end{proof}

Next, by using the duality approach, we can prove the following improved
version of the Poincar\'{e} inequality with monomial-Gaussian measure
$d\mu_{A}$ :
\begin{align*}
&  \int_{\mathbb{R}_{\ast}^{N}}\left\vert \nabla u\right\vert ^{2}d\mu
_{A}-\int_{\mathbb{R}_{\ast}^{N}}\left\vert u-\int_{\mathbb{R}_{\ast}^{N}
}ud\mu_{A}\right\vert ^{2}d\mu_{A}\\
&  \geq\frac{1}{2}\int_{\mathbb{R}_{\ast}^{N}}\left\vert \nabla u-\int
_{\mathbb{R}_{\ast}^{N}}\left(  u-\int_{\mathbb{R}_{\ast}^{N}}ud\mu
_{A}\right)  xd\mu_{A}\right\vert ^{2}d\mu_{A}.
\end{align*}

\begin{proof}
[Proof of Theorem \ref{T2}]Let $u\in X_{A}$. It is obvious that $\int_{\mathbb{R}%
_{\ast}^{N}}\left(  u-\int_{\mathbb{R}_{\ast}^{N}}ud\mu_{A}\right)  d\mu
_{A}=0$. Let $w$ be the solution of the Poisson equation
\[
-\mathbf{L}_{A}w=u-\int_{\mathbb{R}_{\ast}^{N}}ud\mu_{A}.
\]
Note that $w$ satisfies the Neumann boundary condition: $\nabla w\cdot
\overrightarrow{\eta}=0\text{ on }\partial\mathbb{R}_{\ast}^{N}$, where $\overrightarrow{\eta}$ is the outer normal vector of $\mathbb{R}_{\ast
}^{N}$.
See \cite[Lemma 5.4]{Bonn22}, for instance.
Recall that
\[
\Gamma_{2}(w)=||\nabla^{2}w||_{F}^{2}+|\nabla w|^{2}+\left\vert \widetilde
{\nabla}_{A}w\right\vert ^{2}.
\]
Integrating both sides of the above identity, we obtain
\begin{align*}
&  \int_{\mathbb{R}_{\ast}^{N}}\left\vert u-\int_{\mathbb{R}_{\ast}^{N}}%
ud\mu_{A}\right\vert ^{2}d\mu_{A}=\int_{\mathbb{R}_{\ast}^{N}}\left\vert
\mathbf{L}_{A}w\right\vert ^{2}d\mu_{A}=\int_{\mathbb{R}_{\ast}^{N}}\Gamma
_{2}(w)d\mu_{A}\\
=  &  \int_{\mathbb{R}_{\ast}^{N}}|\nabla w|^{2}d\mu_{A}+\int_{\mathbb{R}%
_{\ast}^{N}}||\nabla^{2}w||_{F}^{2}d\mu_{A}+\int_{\mathbb{R}_{\ast}^{N}%
}\left\vert \widetilde{\nabla}_{A}w\right\vert ^{2}d\mu_{A}\\
=  &  \int_{\mathbb{R}_{\ast}^{N}}|\nabla w-\nabla u|^{2}d\mu_{A}%
+2\int_{\mathbb{R}_{\ast}^{N}}\nabla w\nabla ud\mu_{A}\\
&  -\int_{\mathbb{R}_{\ast}^{N}}|\nabla u|^{2}d\mu_{A}+\int_{\mathbb{R}_{\ast
}^{N}}||\nabla^{2}w||_{F}^{2}d\mu_{A}+\int_{\mathbb{R}_{\ast}^{N}}\left\vert
\widetilde{\nabla}_{A}w\right\vert ^{2}d\mu_{A}\\
=  &  \int_{\mathbb{R}_{\ast}^{N}}|\nabla w-\nabla u|^{2}d\mu_{A}%
-2\int_{\mathbb{R}_{\ast}^{N}}u\mathbf{L}_{A}wd\mu_{A}\\
&  -\int_{\mathbb{R}_{\ast}^{N}}|\nabla u|^{2}d\mu_{A}+\int_{\mathbb{R}_{\ast
}^{N}}||\nabla^{2}w||_{F}^{2}d\mu_{A}+\int_{\mathbb{R}_{\ast}^{N}}\left\vert
\widetilde{\nabla}_{A}w\right\vert ^{2}d\mu_{A}\\
=  &  \int_{\mathbb{R}_{\ast}^{N}}|\nabla w-\nabla u|^{2}d\mu_{A}%
+2\int_{\mathbb{R}_{\ast}^{N}}\left\vert u-\int_{\mathbb{R}_{\ast}^{N}}%
ud\mu_{A}\right\vert ^{2}d\mu_{A}\\
&  -\int_{\mathbb{R}_{\ast}^{N}}|\nabla u|^{2}d\mu_{A}+\int_{\mathbb{R}_{\ast
}^{N}}||\nabla^{2}w||_{F}^{2}d\mu_{A}+\int_{\mathbb{R}_{\ast}^{N}}\left\vert
\widetilde{\nabla}_{A}w\right\vert ^{2}d\mu_{A}.
\end{align*}
Therefore
\begin{align*}
&  \int_{\mathbb{R}_{\ast}^{N}}|\nabla u|^{2}d\mu_{A}-\int_{\mathbb{R}_{\ast
}^{N}}\left\vert u-\int_{\mathbb{R}_{\ast}^{N}}ud\mu_{A}\right\vert ^{2}%
d\mu_{A}\\
&  =\int_{\mathbb{R}_{\ast}^{N}}|\nabla w-\nabla u|^{2}d\mu_{A}+\int
_{\mathbb{R}_{\ast}^{N}}||\nabla^{2}w||_{F}^{2}d\mu_{A}+\int_{\mathbb{R}%
_{\ast}^{N}}\left\vert \widetilde{\nabla}_{A}w\right\vert ^{2}d\mu_{A}.
\end{align*}
In particular, we obtain the Poincar\'{e} inequality with monomial weights
\[
\int_{\mathbb{R}_{\ast}^{N}}|\nabla u|^{2}d\mu_{A}\geq\int_{\mathbb{R}_{\ast
}^{N}}\left\vert u-\int_{\mathbb{R}_{\ast}^{N}}ud\mu_{A}\right\vert ^{2}%
d\mu_{A}.
\]
Now, we note that
\[
\int_{\mathbb{R}_{\ast}^{N}}||\nabla^{2}w||_{F}^{2}d\mu_{A}=\sum_{i}%
\int_{\mathbb{R}_{\ast}^{N}}|\nabla\partial_{i}w|^{2}d\mu_{A}.
\]
By applying the Poincar\'{e} inequality with monomial Gaussian weight (Theorem
\ref{T1}), we get
\begin{align*}
\sum_{i}\int_{\mathbb{R}_{\ast}^{N}}|\nabla\partial_{i}w|^{2}d\mu_{A}  &
\geq\sum_{i}\int_{\mathbb{R}_{\ast}^{N}}\left\vert \partial_{i}w-\int
_{\mathbb{R}_{\ast}^{N}}\partial_{i}wd\mu_{A}\right\vert ^{2}d\mu_{A}\\
&  =\int_{\mathbb{R}_{\ast}^{N}}|\nabla w-\int_{\mathbb{R}_{\ast}^{N}}\nabla
wd\mu_{A}|^{2}d\mu_{A}.
\end{align*}
Also,
\begin{align*}
\int_{\mathbb{R}_{\ast}^{N}}\partial_{i}wd\mu_{A}  &  =-\int_{\mathbb{R}%
_{\ast}^{N}}w\left[  \frac{\alpha_{i}}{x_{i}}-x_{i}\right]  d\mu_{A}\\
&  =-\int_{\mathbb{R}_{\ast}^{N}}w\mathbf{L}_{A}x_{i}d\mu_{A}\\
&  =-\int_{\mathbb{R}_{\ast}^{N}}x_{i}\mathbf{L}_{A}wd\mu_{A}\\
&  =\int_{\mathbb{R}_{\ast}^{N}}x_{i}\left(  u-\int_{\mathbb{R}_{\ast}^{N}%
}ud\mu_{A}\right)  d\mu_{A}.
\end{align*}
Therefore,
\[
\int_{\mathbb{R}_{\ast}^{N}}||\nabla^{2}w||_{F}^{2}d\mu_{A}\geq\int
_{\mathbb{R}_{\ast}^{N}}\left\vert \nabla w-\int_{\mathbb{R}_{\ast}^{N}%
}\left(  u-\int_{\mathbb{R}_{\ast}^{N}}ud\mu_{A}\right)  xd\mu_{A}\right\vert
^{2}d\mu_{A}.%
\]
Hence
\begin{align*}
&  \int_{\mathbb{R}_{\ast}^{N}}|\nabla u|^{2}d\mu_{A}-\int_{\mathbb{R}_{\ast
}^{N}}\left\vert u-\int_{\mathbb{R}_{\ast}^{N}}ud\mu_{A}\right\vert ^{2}%
d\mu_{A}\\
=  &  \int_{\mathbb{R}_{\ast}^{N}}|\nabla w-\nabla u|^{2}d\mu_{A}%
+\int_{\mathbb{R}_{\ast}^{N}}||\nabla^{2}w||_{F}^{2}d\mu_{A}+\int
_{\mathbb{R}_{\ast}^{N}}\left\vert \widetilde{\nabla}_{A}w\right\vert ^{2}%
d\mu_{A}\\
\geq &  \int_{\mathbb{R}_{\ast}^{N}}|\nabla u-\nabla w|^{2}d\mu_{A}%
+\int_{\mathbb{R}_{\ast}^{N}}\left\vert \nabla w-\int_{\mathbb{R}_{\ast}^{N}%
}\left(  u-\int_{\mathbb{R}_{\ast}^{N}}ud\mu_{A}\right)  xd\mu_{A}\right\vert
^{2}d\mu_{A}\\
\geq &  \frac{1}{2}\int_{\mathbb{R}_{\ast}^{N}}\left\vert \nabla
u-\int_{\mathbb{R}_{\ast}^{N}}\left(  u-\int_{\mathbb{R}_{\ast}^{N}}ud\mu
_{A}\right)  xd\mu_{A}\right\vert ^{2}d\mu_{A}.
\end{align*}

Now, assume that $x^{A}$ is partial.\ WLOG, let $1\leq k<N$ and assume that
$x^{A}=x_{1}^{\alpha_{1}}...x_{k}^{\alpha_{k}}$, $\alpha_{i}\geq0$. Let
$u=a+\sum\limits_{j=k+1}^{N}a_{j}x_{j}+\sum\limits_{j=k+1}^{N}b_{j}\left(
x_{j}^{2}-1\right)  $. Then $\int_{\mathbb{R}_{\ast}^{N}}ud\mu_{A}=a$.
Therefore
\begin{align*}
&  \int_{\mathbb{R}_{\ast}^{N}}\left\vert u-\int_{\mathbb{R}_{\ast}^{N}}%
ud\mu_{A}\right\vert ^{2}d\mu_{A}\\
&  =\int_{\mathbb{R}_{\ast}^{N}}\left\vert \sum\limits_{j=k+1}^{N}a_{j}%
x_{j}+\sum\limits_{j=k+1}^{N}b_{j}\left(  x_{j}^{2}-1\right)  \right\vert
^{2}d\mu_{A}\\
&  =\sum\limits_{j=k+1}^{N}a_{j}^{2}+2b_{j}^{2}.
\end{align*}
On the other hand, since $\nabla u=\sum\limits_{j=k+1}^{N}\left(  a_{j}%
+2b_{j}x_{j}\right)  \overrightarrow{e}_{j}$, we have
\begin{align*}
\int_{\mathbb{R}_{\ast}^{N}}\left\vert \nabla u\right\vert ^{2}d\mu_{A} &
=\frac{1}{\int_{\mathbb{R}_{\ast}^{N}}x^{A}e^{-\frac{1}{2}|x|^{2}}dx}%
\int_{\mathbb{R}_{\ast}^{N}}\left\vert \sum\limits_{j=k+1}^{N}\left(
a_{j}+2b_{j}x_{j}\right)  \overrightarrow{e}_{j}\right\vert x_{1}^{\alpha_{1}%
}...x_{k}^{\alpha_{k}}e^{-\frac{1}{2}|x|^{2}}dx\\
&  =\sum\limits_{j=k+1}^{N}a_{j}^{2}+4b_{j}^{2}.
\end{align*}
That is,
\[
\int_{\mathbb{R}_{\ast}^{N}}\left\vert \nabla u\right\vert ^{2}d\mu_{A}%
-\int_{\mathbb{R}_{\ast}^{N}}\left\vert u-\int_{\mathbb{R}_{\ast}^{N}}%
ud\mu_{A}\right\vert ^{2}d\mu_{A}=\sum\limits_{j=k+1}^{N}2b_{j}^{2}.
\]
Also,
\begin{align*}
\int_{\mathbb{R}_{\ast}^{N}}\left(  u-\int_{\mathbb{R}_{\ast}^{N}}ud\mu
_{A}\right)  xd\mu_{A}\cdot x &  =\int_{\mathbb{R}_{\ast}^{N}}\left(
\sum\limits_{j=k+1}^{N}a_{j}x_{j}+\sum\limits_{j=k+1}^{N}b_{j}\left(
x_{j}^{2}-1\right)  \right)  xd\mu_{A}\cdot x\\
&  =\sum\limits_{j=k+1}^{N}a_{j}x_{j}.
\end{align*}
Hence
\[
u-\int_{\mathbb{R}_{\ast}^{N}}\left(  u-\int_{\mathbb{R}_{\ast}^{N}}ud\mu
_{A}\right)  xd\mu_{A}\cdot x=a+\sum\limits_{j=k+1}^{N}b_{j}\left(  x_{j}%
^{2}-1\right)
\]
and
\[
\nabla\left[  u-\int_{\mathbb{R}_{\ast}^{N}}\left(  u-\int_{\mathbb{R}_{\ast
}^{N}}ud\mu_{A}\right)  xd\mu_{A}\cdot x\right]  =\sum\limits_{j=k+1}%
^{N}2b_{j}x_{j}\overrightarrow{e}_{j}.
\]
That is
\begin{align*}
&  \int_{\mathbb{R}_{\ast}^{N}}\left\vert \nabla\left[  u-\int_{\mathbb{R}%
_{\ast}^{N}}\left(  u-\int_{\mathbb{R}_{\ast}^{N}}ud\mu_{A}\right)  xd\mu
_{A}\cdot x\right]  \right\vert ^{2}d\mu_{A}\\
&  =\int_{\mathbb{R}_{\ast}^{N}}\left\vert \sum\limits_{j=k+1}^{N}2b_{j}%
x_{j}\overrightarrow{e}_{j}\right\vert ^{2}d\mu_{A}\\
&  =\sum\limits_{j=k+1}^{N}4b_{j}^{2}.
\end{align*}
Therefore
\begin{align*}
&  \int_{\mathbb{R}_{\ast}^{N}}\left\vert \nabla u\right\vert ^{2}d\mu
_{A}-\int_{\mathbb{R}_{\ast}^{N}}\left\vert u-\int_{\mathbb{R}_{\ast}^{N}%
}ud\mu_{A}\right\vert ^{2}d\mu_{A}\\
&  =\frac{1}{2}\int_{\mathbb{R}_{\ast}^{N}}\left\vert \nabla\left[
u-\int_{\mathbb{R}_{\ast}^{N}}\left(  u-\int_{\mathbb{R}_{\ast}^{N}}ud\mu
_{A}\right)  xd\mu_{A}\cdot x\right]  \right\vert ^{2}d\mu_{A}.
\end{align*}

\end{proof}

As a consequence, we obtain
\begin{align*}
&  \int_{\mathbb{R}_{\ast}^{N}}\left\vert \nabla u\right\vert ^{2}d\mu
_{A}-\int_{\mathbb{R}_{\ast}^{N}}\left\vert u-\int_{\mathbb{R}_{\ast}^{N}
}ud\mu_{A}\right\vert ^{2}d\mu_{A}\\
&  \geq\frac{1}{2}\int_{\mathbb{R}_{\ast}^{N}}\left\vert u-\int_{\mathbb{R}
_{\ast}^{N}}ud\mu_{A}+\int_{\mathbb{R}_{\ast}^{N}}uxd\mu_{A}\cdot
\int_{\mathbb{R}_{\ast}^{N}}xd\mu_{A}-\int_{\mathbb{R}_{\ast}^{N}}ud\mu
_{A}\left\vert \int_{\mathbb{R}_{\ast}^{N}}xd\mu_{A}\right\vert ^{2}\right. \\
&  \phantom{+++++}\left.  -\left(  \int_{\mathbb{R}_{\ast}^{N}}uxd\mu
_{A}\right)  \cdot x+\int_{\mathbb{R}_{\ast}^{N}}ud\mu_{A}\left(
\int_{\mathbb{R}_{\ast}^{N}}xd\mu_{A}\right)  \cdot x\right\vert ^{2}d\mu_{A}.
\end{align*}

\begin{proof}
[Proof of Proposition \ref{P1}]By Theorem \ref{T2}, we have
\begin{align*}
&  \int_{\mathbb{R}_{\ast}^{N}}\left\vert \nabla u\right\vert ^{2}d\mu
_{A}-\int_{\mathbb{R}_{\ast}^{N}}\left\vert u-\int_{\mathbb{R}_{\ast}^{N}
}ud\mu_{A}\right\vert ^{2}d\mu_{A}\\
&  \geq\frac{1}{2}\int_{\mathbb{R}_{\ast}^{N}}\left\vert \nabla u-\int
_{\mathbb{R}_{\ast}^{N}}\left(  u-\int_{\mathbb{R}_{\ast}^{N}}ud\mu
_{A}\right)  xd\mu_{A}\right\vert ^{2}d\mu_{A}\\
&  =\frac{1}{2}\int_{\mathbb{R}_{\ast}^{N}}\left\vert \nabla\left(
u-\int_{\mathbb{R}_{\ast}^{N}}\left(  u-\int_{\mathbb{R}_{\ast}^{N}}ud\mu
_{A}\right)  xd\mu_{A}\cdot x\right)  \right\vert ^{2}d\mu_{A}.
\end{align*}
Now, by using the Poincar\'{e} inequality with monomial Gaussian weight
(Theorem \ref{T1}), we obtain
\begin{align*}
&  \frac{1}{2}\int_{\mathbb{R}_{\ast}^{N}}\left\vert \nabla\left(
u-\int_{\mathbb{R}_{\ast}^{N}}\left(  u-\int_{\mathbb{R}_{\ast}^{N}}ud\mu
_{A}\right)  xd\mu_{A}\cdot x\right)  \right\vert ^{2}d\mu_{A}\\
\geq &  \frac{1}{2}\int_{\mathbb{R}_{\ast}^{N}}\left\vert u-\int_{\mathbb{R}
_{\ast}^{N}}\left(  u-\int_{\mathbb{R}_{\ast}^{N}}ud\mu_{A}\right)  xd\mu
_{A}\cdot x-\int_{\mathbb{R}_{\ast}^{N}}\left(  u-\int_{\mathbb{R}_{\ast}^{N}
}\left(  u-\int_{\mathbb{R}_{\ast}^{N}}ud\mu_{A}\right)  xd\mu_{A}\cdot
x\right)  d\mu_{A}\right\vert ^{2}d\mu_{A}\\
=  &  \frac{1}{2}\int_{\mathbb{R}_{\ast}^{N}}\left\vert u-\int_{\mathbb{R}
_{\ast}^{N}}ud\mu_{A}+\int_{\mathbb{R}_{\ast}^{N}}uxd\mu_{A}\cdot
\int_{\mathbb{R}_{\ast}^{N}}xd\mu_{A}-\int_{\mathbb{R}_{\ast}^{N}}ud\mu
_{A}\left\vert \int_{\mathbb{R}_{\ast}^{N}}xd\mu_{A}\right\vert ^{2}\right. \\
&  \phantom{+++++}\left.  -\left(  \int_{\mathbb{R}_{\ast}^{N}}uxd\mu
_{A}\right)  \cdot x+\int_{\mathbb{R}_{\ast}^{N}}ud\mu_{A}\left(
\int_{\mathbb{R}_{\ast}^{N}}xd\mu_{A}\right)  \cdot x\right\vert ^{2}d\mu_{A}.
\end{align*}

\end{proof}

\subsection{Improved Poincar\'{e} inequality with the classical Gaussian
measure-Proofs of Theorem \ref{P1.1} and Theorem \ref{P2}}

\phantom{???}

In the classical Gaussian measure case $d\mu_{0}=\frac{e^{-\frac{1}{2}|x|^{2}
}}{\int_{\mathbb{R}^{N}}e^{-\frac{1}{2}|x|^{2}}dx}dx=\frac{e^{-\frac{1}
{2}|x|^{2}}}{\left(  2\pi\right)  ^{\frac{N}{2}}}dx$, note that $\int
_{\mathbb{R}_{\ast}^{N}}xd\mu_{0}=\overrightarrow{0}$, we can easily deduce
from Theorem \ref{T2} and Proposition \ref{P1} the following improved
Poincar\'{e} inequality with the classical Gaussian measure:

\begin{theorem}
\label{P2.1}For $u\in X_{0}$, we have
\begin{align*}
&  \int_{\mathbb{R}^{N}}|\nabla u|^{2}d\mu_{0}-\int_{\mathbb{R}^{N}}\left\vert
u-\int_{\mathbb{R}^{N}}ud\mu_{0}\right\vert ^{2}d\mu_{0}\\
&  \geq\frac{1}{2}\int_{\mathbb{R}^{N}}\left\vert \nabla u-\int_{\mathbb{R}
^{N}}xud\mu_{0}\right\vert ^{2}d\mu_{0}\\
&  \geq\frac{1}{2}\int_{\mathbb{R}^{N}}\left\vert u-\int_{\mathbb{R}^{N}}
ud\mu_{0}-\left(  \int_{\mathbb{R}^{N}}xud\mu_{0}\right)  \cdot x\right\vert
^{2}d\mu_{0}.
\end{align*}
Moreover, the first inequality can be attained by nonlinear functions.
\end{theorem}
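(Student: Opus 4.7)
The plan is to obtain Theorem \ref{P2.1} as a direct specialization of Theorem \ref{T2} and Proposition \ref{P1} to the trivial monomial $x^{A}\equiv 1$, i.e.\ $\alpha_{1}=\cdots=\alpha_{N}=0$, in which case $\mathbb{R}_{\ast}^{N}=\mathbb{R}^{N}$, $X_{A}=X_{0}$, and $d\mu_{A}=d\mu_{0}$. The essential simplification is the vanishing first moment of the standard Gaussian measure, $\int_{\mathbb{R}^{N}}x\,d\mu_{0}=\overrightarrow{0}$, which is immediate from the evenness of $e^{-|x|^{2}/2}$. Substituting $A=0$ into (\ref{IPM}), the bracketed expression inside the gradient on the right-hand side is $u-\int_{\mathbb{R}^{N}}\bigl(u-\int u\,d\mu_{0}\bigr)x\,d\mu_{0}\cdot x$; since $\int x\,d\mu_{0}=\overrightarrow{0}$, this reduces to $u-\bigl(\int xu\,d\mu_{0}\bigr)\cdot x$, whose gradient is $\nabla u-\int xu\,d\mu_{0}$. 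This yields the first inequality.

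For the second inequality I would apply the classical Gaussian Poincar\'{e} inequality (Theorem \ref{T1} with $A=0$) to the auxiliary function $v:=u-\bigl(\int xu\,d\mu_{0}\bigr)\cdot x$. Because $\nabla v=\nabla u-\int xu\,d\mu_{0}$ and $\int v\,d\mu_{0}=\int u\,d\mu_{0}$ (again using $\int x\,d\mu_{0}=\overrightarrow{0}$), the inequality $\int_{\mathbb{R}^N}|\nabla v|^{2}d\mu_{0}\geq\int_{\mathbb{R}^N}|v-\int v\,d\mu_{0}|^{2}d\mu_{0}$ is precisely the second link in the chain. Equivalently, one can simply plug $A=0$ into Proposition \ref{P1} and watch the extra terms collapse under the same Gaussian-moment cancellation.

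To exhibit nonlinear extremizers for the first inequality, I would test with $u(x)=a+\sum_{j=1}^{N}a_{j}x_{j}+\sum_{j=1}^{N}b_{j}(x_{j}^{2}-1)$ with some $b_{j}\neq 0$, mirroring the partial-weight construction in the proof of Theorem \ref{T2}. Using the orthogonality of the first three Hermite polynomials $1,\,x_{j},\,x_{j}^{2}-1$ under $d\mu_{0}$, a direct computation gives $\int u\,d\mu_{0}=a$, $\int|u-a|^{2}d\mu_{0}=\sum_{j}(a_{j}^{2}+2b_{j}^{2})$, $\int|\nabla u|^{2}d\mu_{0}=\sum_{j}(a_{j}^{2}+4b_{j}^{2})$, and $\int xu\,d\mu_{0}=\sum_{j}a_{j}e_{j}$, so $\int|\nabla u-\int xu\,d\mu_{0}|^{2}d\mu_{0}=\sum_{j}4b_{j}^{2}$. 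Both sides of the first inequality then equal $\sum_{j}2b_{j}^{2}$, giving equality. There is no real obstacle here: the structural content sits in Theorem \ref{T2}, and the only delicate point is bookkeeping the Gaussian first-moment cancellations $\int x\,d\mu_{0}=\overrightarrow{0}$.
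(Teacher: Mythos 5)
Your proposal is correct, but it follows a different route from the one the paper actually writes out for this theorem. The paper does remark, just before the statement, that the result "can easily be deduced from Theorem \ref{T2} and Proposition \ref{P1}" via $\int_{\mathbb{R}^{N}}x\,d\mu_{0}=\overrightarrow{0}$ — which is exactly your specialization argument, and your bookkeeping of the moment cancellations, the application of the Gaussian Poincar\'{e} inequality to $v=u-\bigl(\int_{\mathbb{R}^{N}}xu\,d\mu_{0}\bigr)\cdot x$, and the equality computation with $u=a+\sum_{j}a_{j}x_{j}+\sum_{j}b_{j}(x_{j}^{2}-1)$ are all accurate. The proof the paper actually gives, however, is a self-contained Hermite spectral argument: expanding $u=\sum_{k\geq 0}c_{k}\phi_{k}$, it identifies the deficit as $\sum_{i\geq 2}(i-1)c_{i}^{2}$ and the quantity $\int_{\mathbb{R}^{N}}\bigl\vert\nabla u-\int_{\mathbb{R}^{N}}xu\,d\mu_{0}\bigr\vert^{2}d\mu_{0}$ as $\sum_{i\geq 2}i\,c_{i}^{2}$, so the first inequality is just $(i-1)\geq i/2$ for $i\geq 2$, with equality precisely when $u$ is spanned by Hermite modes of degree at most $2$. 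What each approach buys: yours reuses the general monomial machinery already established (so it is shorter and explains why the constant $\tfrac12$ appears in the weighted setting), while the spectral route gives finer information in the purely Gaussian case — it characterizes all equality cases and, comparing $\sum_{i\geq 2}(i-1)c_{i}^{2}$ with $\sum_{i\geq 2}c_{i}^{2}$, yields the stronger Theorem \ref{P2} with constant $1$ rather than $\tfrac12$ in the last inequality, which your specialization argument cannot reach. For the statement as given (constant $\tfrac12$), your proof is complete.
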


We can also prove the above result by standard spectral analysis. For the convenience of the reader, we give here the proofs of Theorem \ref{P1.1} and Theorem \ref{P2} using the Hermite polynomials decomposition.

\begin{proof}
[Proof of Theorem \ref{P2.1} (Theorem \ref{P1.1})]It is well-known that the
spectrum $\sigma\left(  -\mathbf{L}_{0}\right)  =
\mathbb{N}
$ and the eigenfunctions are given by the Hermite polynomials $\left\{
\phi_{k}\right\}  _{k\geq0}$. We can assume that $\left\{  \phi_{k}\right\}
_{k\geq0}$ forms an orthonormal basis of $L^{2}\left(  \mu_{0}\right)  $ and
$-\mathbf{L}_{0}\phi_{k}=k\phi_{k}$. In particular, the first eigenvalue is
$\lambda_{0}=0$ with eigenfunction $\phi_{0}\left(  x\right)  =1$. The second
eigenvalue is $\lambda_{1}=1$ with eigenfunctions $\phi_{1,i}\left(  x\right)
=x_{i}$. The third eigenvalue is $\lambda_{2}=2$.

By Spectral theorem, we can write
\[
u=\sum_{k\geq0}c_{k}\phi_{k}
\]
with $c_{k}=\int_{\mathbb{R}^{N}}u\phi_{k}d\mu_{0}$. Note that $c_{0}
=\int_{\mathbb{R}^{N}}u\phi_{0}d\mu_{0}=\int_{\mathbb{R}^{N}}ud\mu_{0}$. Also,
$c_{1}\phi_{1}=\sum_{j}\left(  \int_{\mathbb{R}^{N}}ux_{j}d\mu_{0}\right)
x_{j}$.

Therefore
\begin{align*}
\int_{\mathbb{R}^{N}}\left\vert u-\int_{\mathbb{R}^{N}}ud\mu_{0}\right\vert
^{2}d\mu_{0}  &  =\int_{\mathbb{R}^{N}}\left(  \sum_{i\geq1}c_{i}\phi
_{i}\right)  \left(  \sum_{j\geq1}c_{j}\phi_{j}\right)  d\mu_{0}\\
&  =\sum_{i\geq1}c_{i}^{2}\text{.}%
\end{align*}
Also,
\[
-\mathbf{L}_{0}u=\sum_{k\geq0}kc_{k}\phi_{k}=\sum_{k\geq1}kc_{k}\phi_{k}.
\]
Therefore
\begin{align*}
\int_{\mathbb{R}^{N}}|\nabla u|^{2}d\mu_{0}  &  =-\int_{\mathbb{R}^{N}
}u\mathbf{L}_{0}ud\mu_{0}=\int_{\mathbb{R}^{N}}\left(  \sum_{k\geq0}kc_{k}
\phi_{k}\right)  \left(  \sum_{i\geq0}c_{i}\phi_{i}\right)  d\mu_{0}\\
&  =\sum_{i\geq1}ic_{i}^{2}.
\end{align*}
As a consequence, we obtain the Poincar\'{e} inequality:
\[
\int_{\mathbb{R}^{N}}|\nabla u|^{2}d\mu_{0}\geq\int_{\mathbb{R}^{N}}\left\vert
u-\int_{\mathbb{R}^{N}}ud\mu_{0}\right\vert ^{2}d\mu_{0}.
\]
Also,
\[
\int_{\mathbb{R}^{N}}|\nabla u|^{2}d\mu_{0}-\int_{\mathbb{R}^{N}}\left\vert
u-\int_{\mathbb{R}^{N}}ud\mu_{0}\right\vert ^{2}d\mu_{0}=\sum_{i\geq2}\left(
i-1\right)  c_{i}^{2}.
\]
Next, note that
\begin{align*}
&  \int_{\mathbb{R}^{N}}\left\vert \partial_{j}u-\int_{\mathbb{R}^{N}}
x_{j}ud\mu_{0}\right\vert ^{2}d\mu_{0}\\
&  =\int_{\mathbb{R}^{N}}\left\vert \partial_{j}u\right\vert ^{2}d\mu
_{0}-2\left(  \int_{\mathbb{R}^{N}}x_{j}ud\mu_{0}\right)  \left(
\int_{\mathbb{R}^{N}}\partial_{j}ud\mu_{0}\right)  +\left(  \int
_{\mathbb{R}^{N}}x_{j}ud\mu_{0}\right)  ^{2}\\
&  =\int_{\mathbb{R}^{N}}\left\vert \partial_{j}u\right\vert ^{2}d\mu
_{0}-2\left(  \int_{\mathbb{R}^{N}}x_{j}ud\mu_{0}\right)  \left(
\int_{\mathbb{R}^{N}}\partial_{j}u\partial_{j}x_{j}d\mu_{0}\right)  +\left(
\int_{\mathbb{R}^{N}}x_{j}ud\mu_{0}\right)  ^{2}\\
&  =\int_{\mathbb{R}^{N}}\left\vert \partial_{j}u\right\vert ^{2}d\mu
_{0}+2\left(  \int_{\mathbb{R}^{N}}x_{j}ud\mu_{0}\right)  \left(
\int_{\mathbb{R}^{N}}u\mathbf{L}_{0}x_{j}d\mu_{0}\right)  +\left(
\int_{\mathbb{R}^{N}}x_{j}ud\mu_{0}\right)  ^{2}\\
&  =\int_{\mathbb{R}^{N}}\left\vert \partial_{j}u\right\vert ^{2}d\mu
_{0}-\left(  \int_{\mathbb{R}^{N}}x_{j}ud\mu_{0}\right)  ^{2}.
\end{align*}
So
\begin{align*}
\int_{\mathbb{R}^{N}}\left\vert \nabla u-\int_{\mathbb{R}^{N}}xud\mu
_{0}\right\vert ^{2}d\mu_{0}  &  =\int_{\mathbb{R}^{N}}\left\vert \nabla
u\right\vert ^{2}d\mu_{0}-\sum_{i}\left(  \int_{\mathbb{R}^{N}}x_{j}ud\mu
_{0}\right)  ^{2}\\
&  =\sum_{i\geq2}ic_{i}^{2}.
\end{align*}
Therefore
\begin{align*}
&  \int_{\mathbb{R}^{N}}|\nabla u|^{2}d\mu_{0}-\int_{\mathbb{R}^{N}}\left\vert
u-\int_{\mathbb{R}^{N}}ud\mu_{0}\right\vert ^{2}d\mu_{0}\\
&  =\sum_{i\geq2}\left(  i-1\right)  c_{i}^{2}\\
&  \geq\frac{1}{2}\sum_{i\geq2}ic_{i}^{2}\\
&  =\frac{1}{2}\int_{\mathbb{R}^{N}}\left\vert \nabla u-\int_{\mathbb{R}^{N}
}xud\mu_{0}\right\vert ^{2}d\mu_{0}.
\end{align*}
Obviously, the equality happens with $u=\sum_{k=0}^{2}c_{k}\phi_{k}.$
\end{proof}

Actually, we can prove a better result that
\begin{align*}
&  \int_{\mathbb{R}^{N}}|\nabla u|^{2}d\mu_{0}-\int_{\mathbb{R}^{N}}\left\vert
u-\int_{\mathbb{R}^{N}}ud\mu_{0}\right\vert ^{2}d\mu_{0}\\
&  \geq\frac{1}{2}\int_{\mathbb{R}^{N}}\left\vert \nabla u-\int_{\mathbb{R}
^{N}}xud\mu_{0}\right\vert ^{2}d\mu_{0}\\
&  \geq\int_{\mathbb{R}^{N}}\left\vert u-\int_{\mathbb{R}^{N}}ud\mu
_{0}-\left(  \int_{\mathbb{R}^{N}}xud\mu_{0}\right)  \cdot x\right\vert
^{2}d\mu_{0}.
\end{align*}

\begin{proof}
[Proof of Theorem \ref{P2}]As above, let
\[
u=\sum_{k\geq0}c_{k}\phi_{k}.
\]
WLOG, assume that $c_{0}=\int_{\mathbb{R}^{N}}u\phi_{0}d\mu_{0}=\int
_{\mathbb{R}^{N}}ud\mu_{0}=0$. Then
\[
\int_{\mathbb{R}^{N}}|\nabla u|^{2}d\mu_{0}-\int_{\mathbb{R}^{N}}\left\vert
u-\int_{\mathbb{R}^{N}}ud\mu_{0}\right\vert ^{2}d\mu_{0}=\sum_{i\geq2}\left(
i-1\right)  c_{i}^{2}.
\]
Now
\begin{align*}
\int_{\mathbb{R}^{N}}\left\vert u-\left(  \int_{\mathbb{R}^{N}}xud\mu
_{0}\right)  \cdot x\right\vert ^{2}d\mu_{0}  &  =\int_{\mathbb{R}^{N}
}\left\vert u\right\vert ^{2}d\mu_{0}-\sum_{i}\left(  \int_{\mathbb{R}^{N}
}x_{j}ud\mu_{0}\right)  ^{2}\\
&  =\sum_{i\geq2}c_{i}^{2}%
\end{align*}
Therefore
\begin{align*}
&  \int_{\mathbb{R}^{N}}|\nabla u|^{2}d\mu_{0}-\int_{\mathbb{R}^{N}}\left\vert
u-\int_{\mathbb{R}^{N}}ud\mu_{0}\right\vert ^{2}d\mu_{0}\\
&  \geq\frac{1}{2}\int_{\mathbb{R}^{N}}\left\vert \nabla u-\int_{\mathbb{R}
^{N}}xud\mu_{0}\right\vert ^{2}d\mu_{0}\\
&  \geq\int_{\mathbb{R}^{N}}\left\vert u-\int_{\mathbb{R}^{N}}ud\mu
_{0}-\left(  \int_{\mathbb{R}^{N}}xud\mu_{0}\right)  \cdot x\right\vert
^{2}d\mu_{0}.
\end{align*}
Obviously, the equalities happen when $u=\sum_{k=0}^{2}c_{k}\phi_{k}.$
\end{proof}

\subsection{Scale-dependent Poincar\'{e} inequality with monomial Gaussian
weight-Proofs of Theorem \ref{T3}, Theorem \ref{T4} and Theorem \ref{T5}}

\phantom{???}

\begin{proof}
[Proof of Theorem \ref{T3}]We have from Theorem \ref{T1} that
\[
\int_{\mathbb{R}_{\ast}^{N}}|\nabla u|^{2}x^{A}e^{-\frac{1}{2}|x|^{2}}
dx\geq\int_{\mathbb{R}_{\ast}^{N}}\left\vert u-\frac{1}{\int_{\mathbb{R}
_{\ast}^{N}}x^{A}e^{-\frac{1}{2}|x|^{2}}dx}\int_{\mathbb{R}_{\ast}^{N}}
ux^{A}e^{-\frac{1}{2}|x|^{2}}dx\right\vert ^{2}x^{A}e^{-\frac{1}{2}|x|^{2}
}dx.
\]
Now, let $u\left(  x\right)  =v\left(  \lambda x\right)  $. Then $\nabla
u=\lambda\nabla v\left(  \lambda x\right)  $ and
\begin{align*}
\int_{\mathbb{R}_{\ast}^{N}}|\nabla u|^{2}x^{A}e^{-\frac{1}{2}|x|^{2}}dx  &
=\lambda^{2-N-D}\int_{\mathbb{R}_{\ast}^{N}}|\nabla v\left(  \lambda x\right)
|^{2}\left(  \lambda x\right)  ^{A}e^{-\frac{1}{2\left\vert \lambda\right\vert
^{2}}|\lambda x|^{2}}d\left(  \lambda x\right) \\
&  =\lambda^{2-N-D}\int_{\mathbb{R}_{\ast}^{N}}|\nabla v|^{2}x^{A}e^{-\frac
{1}{2\left\vert \lambda\right\vert ^{2}}|x|^{2}}dx
\end{align*}
\begin{align*}
\int_{\mathbb{R}_{\ast}^{N}}ux^{A}e^{-\frac{1}{2}|x|^{2}}dx  &  =\lambda
^{-N-D}\int_{\mathbb{R}_{\ast}^{N}}v\left(  \lambda x\right)  \left(  \lambda
x\right)  ^{A}e^{-\frac{1}{2\left\vert \lambda\right\vert ^{2}}|\lambda
x|^{2}}d\left(  \lambda x\right) \\
&  =\lambda^{-N-D}\int_{\mathbb{R}_{\ast}^{N}}vx^{A}e^{-\frac{1}{2\left\vert
\lambda\right\vert ^{2}}|x|^{2}}dx
\end{align*}

\[
\int_{\mathbb{R}_{\ast}^{N}}x^{A}e^{-\frac{1}{2}|x|^{2}}dx=\lambda^{-N-D}
\int_{\mathbb{R}_{\ast}^{N}}x^{A}e^{-\frac{1}{2\left\vert \lambda\right\vert
^{2}}|x|^{2}}dx
\]
\begin{align*}
&  \int_{\mathbb{R}_{\ast}^{N}}\left\vert u-\frac{1}{\int_{\mathbb{R}_{\ast
}^{N}}x^{A}e^{-\frac{1}{2}|x|^{2}}dx}\int_{\mathbb{R}_{\ast}^{N}}
ux^{A}e^{-\frac{1}{2}|x|^{2}}dx\right\vert ^{2}x^{A}e^{-\frac{1}{2}|x|^{2}
}dx\\
&  =\lambda^{-N-D}\int_{\mathbb{R}_{\ast}^{N}}\left\vert v-\frac{1}
{\int_{\mathbb{R}_{\ast}^{N}}x^{A}e^{-\frac{1}{2\left\vert \lambda\right\vert
^{2}}|x|^{2}}dx}\int_{\mathbb{R}_{\ast}^{N}}vx^{A}e^{-\frac{1}{2\left\vert
\lambda\right\vert ^{2}}|x|^{2}}dx\right\vert ^{2}x^{A}e^{-\frac
{1}{2\left\vert \lambda\right\vert ^{2}}|x|^{2}}dx
\end{align*}
Therefore, we obtain the following
\begin{align*}
&  \lambda^{2}\int_{\mathbb{R}_{\ast}^{N}}|\nabla v|^{2}x^{A}e^{-\frac
{1}{2\left\vert \lambda\right\vert ^{2}}|x|^{2}}dx\\
&  \geq\int_{\mathbb{R}_{\ast}^{N}}\left\vert v-\frac{1}{\int_{\mathbb{R}
_{\ast}^{N}}x^{A}e^{-\frac{1}{2\left\vert \lambda\right\vert ^{2}}|x|^{2}}
dx}\int_{\mathbb{R}_{\ast}^{N}}vx^{A}e^{-\frac{1}{2\left\vert \lambda
\right\vert ^{2}}|x|^{2}}dx\right\vert ^{2}x^{A}e^{-\frac{1}{2\left\vert
\lambda\right\vert ^{2}}|x|^{2}}dx\\
&  \geq\inf_{c}\int_{\mathbb{R}_{\ast}^{N}}\left\vert v-c\right\vert ^{2}
x^{A}e^{-\frac{1}{2\left\vert \lambda\right\vert ^{2}}|x|^{2}}dx.
\end{align*}

Now, assume that $x^{A}$ is partial.\ WLOG, let $1\leq k<N$ and assume that
$x^{A}=x_{1}^{\alpha_{1}}...x_{k}^{\alpha_{k}}$, $\alpha_{i}\geq0$. Then, as
in the proof of Theorem \ref{T1}, it is easy to verify that with
$v=a+\sum\limits_{j=k+1}^{N}a_{j}x_{j}$, then
\[
\int_{\mathbb{R}_{\ast}^{N}}|\nabla v|^{2}x^{A}e^{-\frac{1}{2\left\vert
\lambda\right\vert ^{2}}|x|^{2}}dx=\inf_{c}\int_{\mathbb{R}_{\ast}^{N}
}\left\vert v-c\right\vert ^{2}x^{A}e^{-\frac{1}{2\left\vert \lambda
\right\vert ^{2}}|x|^{2}}dx=\sum\limits_{j=k+1}^{N}a_{j}^{2}.
\]

\end{proof}

\begin{proof}
[Proof of Theorem \ref{T4}]From Proposition \ref{P1}, we have
\begin{align*}
&  \int_{\mathbb{R}_{\ast}^{N}}\left\vert \nabla u\right\vert ^{2}d\mu
_{A}-\int_{\mathbb{R}_{\ast}^{N}}\left\vert u-\int_{\mathbb{R}_{\ast}^{N}
}ud\mu_{A}\right\vert ^{2}d\mu_{A}\\
&  \geq\frac{1}{2}\int_{\mathbb{R}_{\ast}^{N}}\left\vert u-\int_{\mathbb{R}
_{\ast}^{N}}ud\mu_{A}+\int_{\mathbb{R}_{\ast}^{N}}uxd\mu_{A}\cdot
\int_{\mathbb{R}_{\ast}^{N}}xd\mu_{A}-\int_{\mathbb{R}_{\ast}^{N}}ud\mu
_{A}\left\vert \int_{\mathbb{R}_{\ast}^{N}}xd\mu_{A}\right\vert ^{2}\right. \\
&  \phantom{+++++}\left.  -\left(  \int_{\mathbb{R}_{\ast}^{N}}uxd\mu
_{A}\right)  \cdot x+\int_{\mathbb{R}_{\ast}^{N}}ud\mu_{A}\left(
\int_{\mathbb{R}_{\ast}^{N}}xd\mu_{A}\right)  \cdot x\right\vert ^{2}d\mu_{A}.
\end{align*}
As above, with the change of variable $u\left(  x\right)  =v\left(  \lambda
x\right)  $, we have
\begin{align*}
&  \int_{\mathbb{R}_{\ast}^{N}}|\nabla u|^{2}d\mu_{A}-\int_{\mathbb{R}_{\ast
}^{N}}\left\vert u-\int_{\mathbb{R}_{\ast}^{N}}ud\mu_{A}\right\vert ^{2}
d\mu_{A}\\
&  =\lambda^{2-N-D}\int_{\mathbb{R}_{\ast}^{N}}|\nabla v|^{2}x^{A}e^{-\frac
{1}{2\left\vert \lambda\right\vert ^{2}}|x|^{2}}dx\\
&  -\lambda^{-N-D}\int_{\mathbb{R}_{\ast}^{N}}\left\vert v-C_{A}\left(
\lambda,v\right)  \right\vert ^{2}x^{A}e^{-\frac{1}{2\left\vert \lambda
\right\vert ^{2}}|x|^{2}}dx
\end{align*}
for some $C_{A}\left(  \lambda,v\right)  $.

Also,
\begin{align*}
&  \int_{\mathbb{R}_{\ast}^{N}}\left\vert u-\int_{\mathbb{R}_{\ast}^{N}}
ud\mu_{A}+\int_{\mathbb{R}_{\ast}^{N}}uxd\mu_{A}\cdot\int_{\mathbb{R}_{\ast
}^{N}}xd\mu_{A}-\int_{\mathbb{R}_{\ast}^{N}}ud\mu_{A}\left\vert \int
_{\mathbb{R}_{\ast}^{N}}xd\mu_{A}\right\vert ^{2}\right. \\
&  \phantom{+++++}\left.  -\left(  \int_{\mathbb{R}_{\ast}^{N}}uxd\mu
_{A}\right)  \cdot x+\int_{\mathbb{R}_{\ast}^{N}}ud\mu_{A}\left(
\int_{\mathbb{R}_{\ast}^{N}}xd\mu_{A}\right)  \cdot x\right\vert ^{2}d\mu
_{A}\\
&  =\lambda^{-N-D}\int_{\mathbb{R}_{\ast}^{N}}\left\vert v-C_{A}\left(
\lambda,v\right)  +\overrightarrow{D}_{A}\left(  \lambda,v\right)  \cdot
\int_{\mathbb{R}_{\ast}^{N}}xd\mu_{A}-C_{A}\left(  \lambda,v\right)
\left\vert \int_{\mathbb{R}_{\ast}^{N}}xd\mu_{A}\right\vert ^{2}\right. \\
&  \phantom{+++++}\left.  -\overrightarrow{D}_{A}\left(  \lambda,v\right)
\cdot x+C_{A}\left(  \lambda,v\right)  \left(  \int_{\mathbb{R}_{\ast}^{N}
}xd\mu_{A}\right)  \cdot x\right\vert ^{2}x^{A}e^{-\frac{1}{2\left\vert
\lambda\right\vert ^{2}}|x|^{2}}dx
\end{align*}
for some vector $\overrightarrow{D}_{A}\left(  \lambda,v\right)  $. Therefore
\begin{align*}
&  \left\vert \lambda\right\vert ^{2}\int_{\mathbb{R}_{\ast}^{N}}|\nabla
v|^{2}x^{A}e^{-\frac{1}{2\left\vert \lambda\right\vert ^{2}}|x|^{2}}dx\\
&  \geq\inf_{c,\overrightarrow{d}}\int_{\mathbb{R}_{\ast}^{N}}\left(
\left\vert v-c\right\vert ^{2}+\right. \\
&  \left.  +\frac{1}{2}\left\vert v-c+\overrightarrow{d}\cdot\int
_{\mathbb{R}_{\ast}^{N}}xd\mu_{A}-c\left\vert \int_{\mathbb{R}_{\ast}^{N}
}xd\mu_{A}\right\vert ^{2}-\overrightarrow{d}\cdot x+c\left(  \int
_{\mathbb{R}_{\ast}^{N}}xd\mu_{A}\right)  \cdot x\right\vert ^{2}\right)
x^{A}e^{-\frac{1}{2\left\vert \lambda\right\vert ^{2}}|x|^{2}}dx.
\end{align*}

\end{proof}

\begin{proof}
[Proof of Theorem \ref{T5}]We have
\[
\int_{\mathbb{R}^{N}}|\nabla u|^{2}d\mu_{0}-\int_{\mathbb{R}^{N}}\left\vert
u-\int_{\mathbb{R}^{N}}ud\mu_{0}\right\vert ^{2}d\mu_{0}\geq\int
_{\mathbb{R}^{N}}\left\vert u-\int_{\mathbb{R}^{N}}ud\mu_{0}-\left(
\int_{\mathbb{R}^{N}}uxd\mu_{0}\right)  \cdot x\right\vert ^{2}d\mu_{0}.
\]
Let $u\left(  x\right)  =v\left(  \lambda x\right)  $. Then $\nabla
u=\lambda\nabla v\left(  \lambda x\right)  $ and so
\begin{align*}
&  \int_{\mathbb{R}^{N}}|\nabla u|^{2}d\mu_{0}-\int_{\mathbb{R}^{N}}\left\vert
u-\int_{\mathbb{R}^{N}}ud\mu_{0}\right\vert ^{2}d\mu_{0}\\
&  =\lambda^{2-N}\int_{\mathbb{R}^{N}}|\nabla v|^{2}e^{-\frac{1}{2\left\vert
\lambda\right\vert ^{2}}|x|^{2}}dx\\
&  -\lambda^{-N}\int_{\mathbb{R}^{N}}\left\vert v-C_{0}\left(  \lambda
,v\right)  \right\vert ^{2}e^{-\frac{1}{2\left\vert \lambda\right\vert ^{2}
}|x|^{2}}dx.
\end{align*}
for some $C_{0}\left(  \lambda,v\right)  $.

Finally
\begin{align*}
&  \int_{\mathbb{R}^{N}}\left\vert u-\int_{\mathbb{R}^{N}}ud\mu_{0}-\left(
\int_{\mathbb{R}^{N}}uxd\mu_{0}\right)  \cdot x\right\vert ^{2}d\mu_{0}\\
&  =\lambda^{-N}\int_{\mathbb{R}^{N}}\left\vert v-C_{0}\left(  \lambda
,v\right)  -\overrightarrow{D}_{0}\left(  \lambda,v\right)  \cdot x\right\vert
^{2}e^{-\frac{1}{2\left\vert \lambda\right\vert ^{2}}|x|^{2}}dx
\end{align*}
for some $C_{0}\left(  \lambda,v\right)  $ and $\overrightarrow{D}_{0}\left(
\lambda,v\right)  $. Therefore
\begin{align*}
&  \left\vert \lambda\right\vert ^{2}\int_{\mathbb{R}^{N}}|\nabla
v|^{2}e^{-\frac{1}{2\left\vert \lambda\right\vert ^{2}}|x|^{2}}dx\\
&  \geq\inf_{c,\overrightarrow{d}}\int_{\mathbb{R}^{N}}\left(  \left\vert
v-c\right\vert ^{2}+\left\vert v-c-\overrightarrow{d}\cdot x\right\vert
^{2}\right)  e^{-\frac{1}{2\left\vert \lambda\right\vert ^{2}}|x|^{2}}dx.
\end{align*}
Now, let $u_{\lambda}=a_{1}(x_{1}^{2}-\lambda^{2})+\dots+a_{N}(x_{N}%
-\lambda^{2})$. Then with $d\mu_{\lambda}=e^{-\frac{1}{2\left\vert
\lambda\right\vert ^{2}}|x|^{2}}dx$, we have by direct computation that%
\begin{align*}
\left\vert \lambda\right\vert ^{2}\int_{\mathbb{R}^{N}}|\nabla u_{\lambda
}|^{2}e^{-\frac{1}{2\left\vert \lambda\right\vert ^{2}}|x|^{2}}dx  &
=4\lambda^{2}\sum a_{i}^{2}\\
&  =\frac{1}{\lambda^{2}}\inf_{c,\Vec{d}}\left(  2(2\lambda^{4}\sum a_{i}%
^{2})+2\left\vert c\right\vert ^{2}+|d|^{2}\lambda^{2}\right) \\
&  =\inf_{c,\overrightarrow{d}}\int_{\mathbb{R}^{N}}\left(  \left\vert
u_{\lambda}-c\right\vert ^{2}+\left\vert u_{\lambda}-c-\overrightarrow{d}\cdot
x\right\vert ^{2}\right)  e^{-\frac{1}{2\left\vert \lambda\right\vert ^{2}%
}|x|^{2}}dx.
\end{align*}

\end{proof}

\section{Stability and the improved stability of the Heisenberg Uncertainty
Principle with monomial weight-Proofs of Theorem \ref{T1.1}, Theorem \ref{T6}
and Theorem \ref{T7}}

\begin{proof}
[Proof of Theorem \ref{T1.1}]Let $u\in N_{\ast}\setminus\left\{  0\right\}  $.
We have from Proposition \ref{P0} and the scale-dependent Poincar\'{e}
inequality for monomial Gaussian weight (Theorem \ref{T3}) with $\lambda
=\left(  \frac{\int_{\mathbb{R}_{\ast}^{N}}|u|^{2}|x|^{2}x^{A}dx}%
{\int_{\mathbb{R}_{\ast}^{N}}|\nabla u|^{2}x^{A}dx}\right)  ^{\frac{1}{4}}$
that
\begin{align*}
&  \left(  \int_{\mathbb{R}_{\ast}^{N}}|u|^{2}|x|^{2}x^{A}dx\right)
^{\frac{1}{2}}\left(  \int_{\mathbb{R}_{\ast}^{N}}|\nabla u|^{2}%
x^{A}dx\right)  ^{\frac{1}{2}}-\frac{D}{2}\int_{\mathbb{R}_{\ast}^{N}}%
|u|^{2}x^{A}dx\\
&  =\frac{\lambda^{2}}{2}\int_{\mathbb{R}_{\ast}^{N}}\left\vert \nabla\left(
ue^{\frac{|x|^{2}}{2\lambda^{2}}}\right)  \right\vert ^{2}e^{-\frac{|x|^{2}%
}{\lambda^{2}}}x^{A}dx\\
&  \geq\int_{\mathbb{R}_{\ast}^{N}}\left\vert ue^{\frac{|x|^{2}}{2\lambda^{2}%
}}-\frac{1}{\int_{\mathbb{R}_{\ast}^{N}}x^{A}e^{-\frac{1}{\lambda^{2}}|x|^{2}%
}dx}\int_{\mathbb{R}_{\ast}^{N}}ue^{\frac{|x|^{2}}{2\lambda^{2}}}%
x^{A}e^{-\frac{1}{\lambda^{2}}|x|^{2}}dx\right\vert ^{2}x^{A}e^{-\frac
{1}{\lambda^{2}}|x|^{2}}dx\\
&  =\int_{\mathbb{R}_{\ast}^{N}}\left\vert u-\frac{e^{-\frac{|x|^{2}}%
{2\lambda^{2}}}}{\int_{\mathbb{R}_{\ast}^{N}}x^{A}e^{-\frac{1}{\lambda^{2}%
}|x|^{2}}dx}\int_{\mathbb{R}_{\ast}^{N}}ue^{\frac{|x|^{2}}{2\lambda^{2}}}%
x^{A}e^{-\frac{1}{\lambda^{2}}|x|^{2}}dx\right\vert ^{2}x^{A}dx\\
&  \geq\inf_{c\text{,}\lambda\neq0}\int_{\mathbb{R}_{\ast}^{N}}\left\vert
u-ce^{-\frac{|x|^{2}}{2\lambda^{2}}}\right\vert ^{2}x^{A}dx.
\end{align*}
Now, assume that $x^{A}$ is partial.\ WLOG, let $1\leq k<N$ and assume that
$x^{A}=x_{1}^{\alpha_{1}}...x_{k}^{\alpha_{k}}$, $\alpha_{i}\geq0$. Let
$u=x_{N}e^{-\frac{|x|^{2}}{2}}\notin E_{HUPA}$. Then
\[
\int_{\mathbb{R}_{\ast}^{N}}|u|^{2}|x|^{2}x^{A}dx=\int_{\mathbb{R}_{\ast}^{N}%
}x_{N}^{2}|x|^{2}x^{A}e^{-|x|^{2}}dx
\]
Also since $\nabla u=\left(  \overrightarrow{e}_{N}-x_{N}x\right)
e^{-\frac{|x|^{2}}{2}}$, we have%
\begin{align*}
\int_{\mathbb{R}_{\ast}^{N}}|\nabla u|^{2}x^{A}dx  &  =\int_{\mathbb{R}_{\ast
}^{N}}\left\vert \overrightarrow{e}_{N}-x_{N}x\right\vert ^{2}x^{A}%
e^{-|x|^{2}}dx\\
&  =\int_{\mathbb{R}_{\ast}^{N}}\left(  1-2x_{N}^{2}+x_{N}^{2}\left\vert
x\right\vert ^{2}\right)  x^{A}e^{-|x|^{2}}dx\\
&  =\int_{\mathbb{R}_{\ast}^{N}}x_{N}^{2}\left\vert x\right\vert ^{2}%
x^{A}e^{-|x|^{2}}dx.
\end{align*}
Therefore
\[
\left(  \frac{\int_{\mathbb{R}_{\ast}^{N}}|u|^{2}|x|^{2}x^{A}dx}%
{\int_{\mathbb{R}_{\ast}^{N}}|\nabla u|^{2}x^{A}dx}\right)  ^{\frac{1}{4}}=1.
\]
By Proposition \ref{P0}, we get
\begin{align*}
&  \left(  \int_{\mathbb{R}_{\ast}^{N}}|\nabla u|^{2}x^{A}dx\right)
^{\frac{1}{2}}\left(  \int_{\mathbb{R}_{\ast}^{N}}|u|^{2}|x|^{2}%
x^{A}dx\right)  ^{\frac{1}{2}}-\frac{D}{2}\int_{\mathbb{R}_{\ast}^{N}}%
|u|^{2}x^{A}dx\\
&  =\frac{1}{2}\int_{\mathbb{R}_{\ast}^{N}}\left\vert \nabla\left(
ue^{\frac{|x|^{2}}{2}}\right)  \right\vert ^{2}x^{A}e^{-|x|^{2}}dx\\
&  =\frac{1}{2}\int_{\mathbb{R}_{\ast}^{N}}x^{A}e^{-|x|^{2}}dx.
\end{align*}
On the other hand,
\begin{align*}
&  \int_{\mathbb{R}_{\ast}^{N}}\left\vert u-ce^{-\frac{1}{2\lambda^{2}}%
|x|^{2}}\right\vert ^{2}x^{A}dx\\
&  =\int_{\mathbb{R}_{\ast}^{N}}\left\vert x_{N}e^{-\frac{|x|^{2}}{2}%
}-ce^{-\frac{1}{2\lambda^{2}}|x|^{2}}\right\vert ^{2}x^{A}dx\\
&  =\frac{1}{2}\int_{\mathbb{R}_{\ast}^{N}}x^{A}e^{-|x|^{2}}dx+c^{2}%
\int_{\mathbb{R}_{\ast}^{N}}e^{-\frac{1}{\lambda^{2}}|x|^{2}}x^{A}dx.
\end{align*}
Therefore
\[
d_{A}^{2}\left(  u,E_{HUPA}\right)  =\inf_{c,\lambda\neq0}\left(
\int_{\mathbb{R}_{\ast}^{N}}\left\vert u-ce^{-\frac{1}{2\lambda^{2}}|x|^{2}%
}\right\vert ^{2}x^{A}dx\right)  =\frac{1}{2}\int_{\mathbb{R}_{\ast}^{N}}%
x^{A}e^{-|x|^{2}}dx.
\]
That is
\[
\delta_{A}\left(  u\right)  =d_{A}^{2}\left(  u,E_{HUPA}\right)  .
\]

\end{proof}

\begin{proof}
[Proof of Theorem \ref{T6}]Let $u\in N_{\ast}\setminus\left\{  0\right\}  $.
We have from Proposition \ref{P0} and the improved scale-dependent
Poincar\'{e} inequality for monomial Gaussian weight (Theorem \ref{T4}) with
$\lambda=\left(  \frac{\int_{\mathbb{R}_{\ast}^{N}}|u|^{2}|x|^{2}x^{A}dx}
{\int_{\mathbb{R}_{\ast}^{N}}|\nabla u|^{2}x^{A}dx}\right)  ^{\frac{1}{4}}$
that
\begin{align*}
&  \left(  \int_{\mathbb{R}_{\ast}^{N}}|u|^{2}|x|^{2}x^{A}dx\right)
^{\frac{1}{2}}\left(  \int_{\mathbb{R}_{\ast}^{N}}|\nabla u|^{2}
x^{A}dx\right)  ^{\frac{1}{2}}-\frac{D}{2}\int_{\mathbb{R}_{\ast}^{N}}
|u|^{2}x^{A}dx\\
&  =\frac{\lambda^{2}}{2}\int_{\mathbb{R}_{\ast}^{N}}\left\vert \nabla\left(
ue^{\frac{|x|^{2}}{2\lambda^{2}}}\right)  \right\vert ^{2}e^{-\frac{|x|^{2}
}{\lambda^{2}}}x^{A}dx\\
&  \geq\inf_{c,\overrightarrow{d},\lambda\neq0}\int_{\mathbb{R}_{\ast}^{N}
}\left(  \left\vert u-ce^{-\frac{|x|^{2}}{2\lambda^{2}}}\right\vert ^{2}
+\frac{1}{2}\left\vert u-\left(  c+\overrightarrow{d}\cdot x\right)
e^{-\frac{1}{2\lambda^{2}}|x|^{2}}\right\vert ^{2}\right)  x^{A}dx\\
&  \geq\inf_{c\text{,}\lambda\neq0}\int_{\mathbb{R}_{\ast}^{N}}\left\vert
u-ce^{-\frac{|x|^{2}}{2\lambda^{2}}}\right\vert ^{2}x^{A}dx+\frac{1}{2}
\inf_{c,\overrightarrow{d},\lambda\neq0}\int_{\mathbb{R}_{\ast}^{N}}\left\vert
u-\left(  c+\overrightarrow{d}\cdot x\right)  e^{-\frac{1}{2\lambda^{2}
}|x|^{2}}\right\vert ^{2}x^{A}dx.
\end{align*}

\end{proof}

\begin{proof}
[Proof of Theorem \ref{T7}]By applying the improved scale-dependent
Poincar\'{e} inequality with the Gaussian type measure (Theorem \ref{T5}), we
obtain
\begin{align*}
&  \left(  \int_{\mathbb{R}^{N}}|u|^{2}|x|^{2}dx\right)  ^{\frac{1}{2}}\left(
\int_{\mathbb{R}^{N}}|\nabla u|^{2}dx\right)  ^{\frac{1}{2}}-\frac{N}{2}
\int_{\mathbb{R}^{N}}|u|^{2}dx\\
&  =\frac{\lambda^{2}}{2}\int_{\mathbb{R}^{N}}\left\vert \nabla\left(
ue^{\frac{|x|^{2}}{2\lambda^{2}}}\right)  \right\vert ^{2}e^{-\frac{|x|^{2}
}{\lambda^{2}}}dx\\
&  \geq\inf_{c,\overrightarrow{d},\lambda\neq0}\int_{\mathbb{R}^{N}}\left(
\left\vert ue^{\frac{|x|^{2}}{2\lambda^{2}}}-c\right\vert ^{2}+\left\vert
ue^{\frac{|x|^{2}}{2\lambda^{2}}}-c-\overrightarrow{d}\cdot x\right\vert
^{2}\right)  e^{-\frac{|x|^{2}}{\lambda^{2}}}dx\\
&  =\inf_{c,\overrightarrow{d},\lambda\neq0}\int_{\mathbb{R}^{N}}\left(
\left\vert u-ce^{-\frac{|x|^{2}}{2\lambda^{2}}}\right\vert ^{2}+\left\vert
u-\left(  c+\overrightarrow{d}\cdot x\right)  e^{-\frac{|x|^{2}}{2\lambda^{2}
}}\right\vert ^{2}\right)  dx\\
&  \geq\inf_{c,\lambda\neq0}\int_{\mathbb{R}^{N}}\left\vert u-ce^{-\frac
{|x|^{2}}{2\lambda^{2}}}\right\vert ^{2}dx+\inf_{c,\overrightarrow{d}
,\lambda\neq0}\int_{\mathbb{R}^{N}}\left\vert u-\left(  c+\overrightarrow
{d}\cdot x\right)  e^{-\frac{|x|^{2}}{2\lambda^{2}}}\right\vert ^{2}dx.
\end{align*}

\end{proof}

\end{document}